\documentclass{amsart}
\usepackage{tikz}
\usepackage{xcolor}
\usepackage{amssymb,latexsym,amsmath,extarrows}
\usepackage{amsthm}
\usepackage{mathabx}
\usepackage{graphicx,mathrsfs,comment}
\usepackage{hyperref,url}
\usepackage{pict2e}
\usepackage{enumerate}
\usepackage{hyperref}
\usepackage{bm}

\usepackage{cancel}

\usepackage{amstext}
\usepackage{bbm} 

\numberwithin{equation}{section}

\newcommand{\orcid}[1]{\href{https://orcid.org/#1}{\texttt{ORCID: #1}}}

\usepackage{esint}

\newtheorem{theorem}{Theorem}[section]
\newtheorem{lemma}[theorem]{Lemma}

\newtheorem{proposition}[theorem]{Proposition}

\newtheorem*{remark*}{Remark}

\newtheorem{definition}[theorem]{Definition}
\newtheorem{corollary}[theorem]{Corollary}

\makeatletter
\newcommand{\barredsum}{%
  \DOTSB\mathop{\mathpalette\@barredsum\relax}\slimits@
}
\newcommand{\@barredsum}[2]{%
  \begingroup
  \sbox\z@{$#1\sum$}%
  \setlength{\unitlength}{\dimexpr2pt+\ht\z@+\dp\z@\relax}%
  \@barredsumthickness{#1}%
  \vphantom{\@barredsumbar}%
  \ooalign{$\m@th#1\sum$\cr\hidewidth$#1\@barredsumbar$\hidewidth\cr}%
  \endgroup
}
\newcommand{\@barredsumbar}{%
  \vcenter{\hbox{\begin{picture}(0,1)\roundcap\Line(0,0)(0,1)\end{picture}}}%
}
\newcommand{\@barredsumthickness}[1]{
  \linethickness{%
    1.25\fontdimen8
      \ifx#1\displaystyle\textfont\else
      \ifx#1\textstyle\textfont\else
      \ifx#1\scriptstyle\scriptfont\else
      \scriptscriptfont\fi\fi\fi 3
  }%
}
\makeatother

\newcommand{\ga}{\gamma}

\newcommand{\de}{\delta}

\newcommand{\e}{\varepsilon}

\newcommand{\la}{\lambda}

\newcommand{\si}{\sigma}
\newcommand{\Si}{\Sigma}
\newcommand{\vp}{\varphi}

\newcommand{\cv}{\mathcal V}

\newcommand{\cq}{\mathcal Q}

\newcommand{\wh}{\widehat}

\newcommand{\ZR}{\mathbb{R}}
\newcommand{\ZT}{\mathbb{T}}

\newcommand{\ZS}{\mathbb{S}}

\newcommand{\Id}{{\bf{1}}}

\newcommand{\cT}{{\mathcal T}}

\newcommand{\BR}{{\rm{Br}}}

\newcommand{\supp}{{\rm supp}}

\begin{document}

\title[A weighted restriction estimate in $\mathbb R^2$]{A weighted restriction estimate in $\mathbb R^2$}

\date{}

\author{Xiangyu Wang} \address{Xiangyu Wang \\ Department of Mathematics\\ University of Illinois Urbana-Champaign\\  \orcid{0009-0003-5983-6961}} \email{xw70@illinois.edu}

\begin{abstract}

We establish a weighted restriction estimate in $\mathbb{R}^2$. Our proof combines new two-ends Furstenberg estimates with refined decoupling.As an application, our result yields decay estimates for the circular $L^p$-means of the Fourier transforms of fractal measures in $\mathbb{R}^2$.

\end{abstract}
\maketitle


\section{Introduction}
The main objective of this paper is to extend recent weighted $L^2$ estimates \cite{Wu} for the Fourier extension operator. As an application of these estimates, we obtain new bounds for the circular $L^p$-means of the Fourier transforms of fractal measures in $\mathbb{R}^2$.

\subsection{Overview}

We first give a brief introduction to the weighted $L^2$ estimate considered in this paper.
Let $S\subset \ZR^2$ be a compact curve with nonvanishing curvature. Let $\si_S$ denote the surface measure on $S$.
The Fourier extension operator associated with $S$ is defined by
\[    E_S f(x)
    :=
    \int_S e^{ix\cdot\xi} f(\xi)\,d\si_S(\xi),
    \qquad f:\mathbb{R}^2  \to \mathbb{C}, \quad x\in\ZR^2.\]
Let $B_r(y)\subset \mathbb R^2$ denote the ball of radius $r$ centered at $y$.
Fix $t\in[0,2]$ and $0<\de\ll 1$, and let $\mathcal{B}_\de$ be a finitely
overlapping collection of balls of radius $\de$ covering $B_1(0)$.
For a set $E\subset B_1(0)$, we say that $E$ is a {\bf Katz-Tao $(\de,t)$-set} if
\begin{equation*}
    \# \{B_\de \in \mathcal{B}_\de: E\cap B(x,r) \cap B_\de \neq \emptyset \}\lesssim (r/\de)^t, \hspace{.3cm}\forall x \in\ZR^2, \,r\in[\de,1].
\end{equation*}
We state our main result as follows.
\begin{theorem}\label{main-result}
Suppose
\begin{equation}\label{t-full-range}
\frac{9-\sqrt{33}}{4} \leq t \leq 2.
\end{equation}
Let $X\subset B_R(0)$ with $R\gg1$.
Assume that the $R^{-1}$-dilate of $X$ is a Katz--Tao
$(R^{-1},t)$-set. Then for all $f:\mathbb{R}^2 \to \mathbb{C}$, the estimate
\[
\|E_Sf\|_{L^q(X)}\lesssim_{\e,p} R^\e\|f\|_{L^p(S,d\si_S)}
\]
holds provided that
\begin{equation} \label{pq-range-small-t}
    \frac{6t+12}{6-t}\le q\le 4t, \qquad \frac{2t}{q}+\frac{1}{p} = 1
\end{equation}
if 
\begin{equation} \label{small-t}
\frac{9-\sqrt{33}}{4}\le t<1,
\end{equation}
or
\begin{equation} \label{pq-range-large-t}
\frac{18t}{t+4}\le q\le 2t+2, \qquad \frac{t+1}{q}+\frac{1}{p} = 1
\end{equation}
if
\begin{equation} \label{large-t}
1\le t\le 2
\end{equation}
\end{theorem}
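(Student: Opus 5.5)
The plan is to interpolate between two endpoint estimates for each range of $t$, and to prove the nontrivial endpoint (the small-$q$ end) by a broad–narrow / wave-packet argument combining refined decoupling with a two-ends Furstenberg estimate.

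\emph{Trivial endpoint.} Take $q=4t$, $p=\infty$ when $t<1$, and $q=2t+2$, $p=\infty$ when $t\ge 1$. At these exponents we have $2t/q+1/p=1/2$ and $(t+1)/q+1/p=1$ respectively, and the bound should follow from known results. When $t\ge1$, the $L^\infty\to L^{2t+2}(X)$ estimate follows from the bilinear/refined decoupling bound of Du--Zhang type for $\alpha$-dimensional weights, specialised to $n=2$. When $t<1$, I expect the $L^\infty\to L^{4t}$ estimate to follow from the $L^4$ bilinear estimate together with the trivial bound $|E_Sf|\lesssim\|f\|_\infty$ and the counting $|X|\lesssim R^t$. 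Since H\"older's inequality in $p$ cannot move along the line $2t/q+1/p=1$, the scaling line must be handled by interpolation between the two endpoints. I would therefore reduce, by Bourgain--Guth type multilinear interpolation with $\varepsilon$ losses, to restricted-type estimates for $f=\mathbf 1_F$ on the appropriate line.

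\emph{Main endpoint.} The endpoints are $q=\frac{6t+12}{6-t}$ for $t<1$ and $q=\frac{18t}{t+4}$ for $t\ge1$. Decompose $E_Sf=\sum_T E_Sf_T$ into wave packets on $R^{1/2}\times R$ tubes. By pigeonholing, restrict to a collection $\mathbb T$ of tubes with comparable $\|f_T\|$, and to points $x\in X$ lying in $\sim\mu$ tubes.

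The broad/narrow dichotomy then splits the argument in two. Narrow points are handled by induction on scales via parabolic rescaling; the rescaled $X$ remains a Katz--Tao set with the same exponent $t$ at the coarser scale. At broad points, refined decoupling (Guth--Iosevich--Ou--Wang) gives
\[
|E_Sf(x)|^{6}\lesssim R^{\varepsilon}\,\mu^{2}\sum_{T\ni x}|E_Sf_T|^6 .
\]
It then remains to bound the number of $\mu$-rich points of $X$ by an incidence estimate for $\delta$-tubes and a Katz--Tao $(\delta,t)$ set at $\delta=R^{-1/2}$, after rescaling by $R$. Here the two-ends Furstenberg estimate enters. After a two-ends reduction (iterating on subtubes where $X\cap T$ concentrates, and inducting there), each tube $T$ meets $X$ in a set that is not concentrated in a short segment. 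The Furstenberg-type bound then gives $\#\{\mu\text{-rich}\}\lesssim R^\varepsilon\,|\mathbb T|^{a}\mu^{-b}$ with exponents depending on $t$. Combining this with the decoupling inequality and $|F|\gtrsim|\mathbb T|R^{-1/2}$, and optimising over $\mu$ and $|\mathbb T|$, should yield the endpoint. The threshold $t\ge\frac{9-\sqrt{33}}4$ is presumably exactly where the two competing bounds in this optimisation cross, as a root of $2t^2-9t+6=0$.

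\emph{Main obstacle.} I expect the hardest step to be the two-ends reduction combined with the incidence bound. Tubes through $X$ may concentrate on short segments, and the reduction must preserve both the Katz--Tao hypothesis on $X$ and the wave-packet structure. My first attempt would be a standard two-ends pigeonholing at scale $R^{\varepsilon}$, with an induction on $R$ absorbing the concentrated case. Everything else is bookkeeping of exponents, so I would check numerically that the optimisation reproduces $\frac{6t+12}{6-t}$ and $\frac{18t}{t+4}$.
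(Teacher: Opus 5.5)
There is a genuine gap, and it is in the architecture rather than the bookkeeping. Your ``trivial endpoint'' is not on the scaling line. At $q=4t$ (resp.\ $q=2t+2$) the relation $\frac{2t}{q}+\frac1p=1$ (resp.\ $\frac{t+1}{q}+\frac1p=1$) forces $p=2$, and your own computation $2t/q+1/p=1/2$ shows that $(4t,\infty)$ lies off it. Interpolating that point with the main endpoint $(q_0,p_0)$ therefore gives, for every $q\neq q_0$, an $L^p\to L^q(X)$ bound with $p$ strictly larger than the theorem requires, i.e.\ a strictly weaker statement. The justification you sketch for that endpoint also loses a power of $R$. The trivial bound with $|X|\lesssim R^t$ costs $R^{t/q}$, and passing from a bilinear $L^4$ bound to $L^{4t}(X)$ for $t<1$ by H\"older costs $R^{(1-t)/4}$. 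For $t<1$ the correct endpoint $(4t,2)$ is not trivial: it is an instance of the theorem itself. It is available only because $4t\ge\frac{6t+12}{6-t}$, which is equivalent to $2t^2-9t+6\le 0$. That non-emptiness of the $q$-range, not a crossing point in an optimisation, is where $\frac{9-\sqrt{33}}{4}$ comes from.

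The missing observation is that $p\ge2$ and $q\ge p$ throughout the admissible range. Hence neither interpolation nor a restricted-type reduction is needed: the paper runs the broad/narrow argument directly at every $(q,p)$ on the line.
\begin{itemize}
\item The broad term is bounded by an $L^2\to L^q(X)$ estimate valid for all $q\ge q_0$, followed by $\|f\|_2\lesssim\|f\|_p$. That estimate comes by level-set pigeonholing (subsets of $X$ stay Katz--Tao) from a weighted bound $\|\BR_AE_\Phi f\|_{L^2(X)}\lesssim R^\e|X|^{\alpha}\|f\|_2$ with $\alpha=\frac{2t}{3t+6}$ or $\frac{4t-2}{9t}$, which gives exactly $q\ge(\frac12-\alpha)^{-1}$.
\item The narrow term closes by parabolic rescaling because $K^{\min\{2t,t+1\}-q+q/p}=1$. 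The rescaled set is not automatically Katz--Tao, as you assert. One must pigeonhole $K\times K^2$ rectangles with $|X\cap B|\sim\mu$ and refine them via Lemma~\ref{katz-tao-set-lem}, losing a factor $\mu K^{-\min\{2t,t+1\}}$; this loss is exactly what the scaling relation pays for.
\item Your broad sketch names only refined decoupling plus the Furstenberg count, and that cannot produce $\alpha$ alone. It gives $\|\BR_AE_\Phi f\|_{L^2(X)}^2\lessapprox K^{O(1)}M^{-t/3}r^{t/2}\|f\|_2^2$ (resp.\ $M^{(t-2)/3}r^{t/2}$ for $t\ge1$), which does not improve as $|X|$ shrinks. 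The paper interpolates this with two further bounds on the same quantity:
  \begin{itemize}
  \item the standard estimate $KM^2sr^{-1/2}(\#\mathcal Q)(\#\mathcal T')^{-1}\|f\|_2^2$, which is good for small $M$;
  \item Wu's hairbrush bound $r^{O(\e^2)}|X|r^{-1/2}\|f\|_2^2$ in the two-ends case, which is good for small $|X|$.
  \end{itemize}
  The non-two-ends case is absorbed by induction on scales. It is this three-way interpolation, not an unspecified optimisation over $\mu$ and $|\mathbb T|$, that yields $\alpha$.
\end{itemize}
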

As a consequence, we obtain new bounds for the circular $L^p$-means of the
Fourier transforms of fractal measures in $\mathbb{R}^2$.
\begin{theorem}\label{decay}
Suppose $t$ satisfies \eqref{t-full-range}. Let $R\gg1$, and let $\mu$ be a
probability measure supported in $B_1(0)$ satisfying the Frostman condition $\mu(B_r)\lesssim r^t$ for every ball $B_r\subset B_1(0)$ of radius $r$. If $t$ satisfies \eqref{small-t}, then
\[
    \left(
        \int_{\ZS^1}
        |\wh{\mu}(R\xi)|^p\,d\si(\xi)
    \right)^{1/p}
    \lesssim_{p,\e}
    R^{-\frac{1}{2p}+\e}
    \qquad
\text{for }
\frac{3t+6}{-t^2+6t}\le p\le 2,
\]
If $t$ satisfies \eqref{large-t}, then
\[
    \left(
        \int_{\ZS^1}
        |\wh{\mu}(R\xi)|^p\,d\si(\xi)
    \right)^{1/p}
    \lesssim_{p,\e}
    R^{-\frac{t}{(t+1)p}+\e}
    \qquad
    \text{for }
\frac{18t}{t^2+5t+4}\le p\le 2.
\]
\end{theorem}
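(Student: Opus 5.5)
The plan is to deduce Theorem~\ref{decay} from Theorem~\ref{main-result} by duality, a rescaling, and a dyadic pigeonholing of the mass of $\mu$ at scale $R^{-1}$.

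\textbf{Duality and rescaling.} By duality on $\ZS^1$, it suffices to bound $\bigl|\int_{\ZS^1}\wh\mu(R\xi)g(\xi)\,d\si(\xi)\bigr|$ for $\|g\|_{L^{p'}(\ZS^1)}=1$. By Fubini this quantity equals $\bigl|\int E_S g(-Rx)\,d\mu(x)\bigr|$, with $S=\ZS^1$. Let $\nu$ be the pushforward of $\mu$ under $x\mapsto -Rx$, which is supported in $B_R(0)$. Since $E_Sg$ has Fourier support in the unit ball, it is locally constant at scale $1$. Covering $B_R$ by unit balls $B$ and writing $|E_Sg|(B)$ for a suitable local maximum (handled by the usual weighted-average argument), we get
\[\int |E_Sg|\,d\nu\lesssim \sum_B \nu(B)\,|E_Sg|(B).\]
The Frostman condition gives $\nu(B)\lesssim R^{-t}$. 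Balls with $\nu(B)\le R^{-100}$ contribute trivially, since $|E_Sg|\lesssim 1$.

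\textbf{Pigeonholing.} For each dyadic $2^{-k}\in[R^{-100},R^{-t}]$, let $X_k$ be the union of the unit balls $B$ with $\nu(B)\sim 2^{-k}$. Then $\#X_k\lesssim 2^k$, and any ball of radius $\rho\in[1,R]$ contains $\lesssim 2^k(\rho/R)^t = C_k\rho^t$ of these unit balls, where $C_k:=2^kR^{-t}\ge1$. Hence the $R^{-1}$-dilate of $X_k$ is a Katz--Tao $(R^{-1},t)$-set with constant $C_k$.

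Next, split $X_k$ into $M=\lceil C_k\rceil$ pieces by assigning each ball independently and uniformly at random. For a fixed ball $B(x,\rho)$, the expected count in each piece is $\lesssim\rho^t$. By Chernoff's bound and a union bound over the $R^{O(1)}$ essentially distinct test balls, some assignment makes every piece Katz--Tao with constant $O(\log R)$. Any such set can be further split into $O(\log R)^{O(1)}$ genuinely Katz--Tao sets, or the $\log R$ can simply be absorbed into $R^\e$.

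Applying Theorem~\ref{main-result} to each piece and summing over the pieces gives $\|E_Sg\|_{L^q(X_k)}\lesssim R^\e C_k^{1/q}\|g\|_{L^{p_m}}$, where $(q,p_m)$ is the admissible pair. Then by H\"older,
\[
\sum_{B\subset X_k}\nu(B)|E_Sg|(B)\lesssim 2^{-k}(\#X_k)^{1-1/q}\|E_Sg\|_{L^q(X_k)}\lesssim R^\e\,2^{-k/q}(2^kR^{-t})^{1/q}\|g\|_{L^{p_m}}=R^{-t/q+\e}\|g\|_{L^{p_m}}.
\]
Summing over the $O(\log R)$ values of $k$ costs only another $R^\e$.

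\textbf{Choice of exponents.} Take $p_m=p'$; then $\|g\|_{L^{p_m}}=1$. For $1\le t\le2$, the relation $\frac{t+1}{q}=1-\frac1{p'}=\frac1p$ gives $q=(t+1)p$, so the decay is $R^{-t/((t+1)p)}$. The constraint $\frac{18t}{t+4}\le q\le 2t+2$ of \eqref{pq-range-large-t} becomes exactly $\frac{18t}{(t+4)(t+1)}\le p\le 2$. For $t$ in the range \eqref{small-t}, the relation $\frac{2t}{q}=\frac1p$ gives $q=2tp$, so the decay is $R^{-t/q}=R^{-1/(2p)}$. The constraint $\frac{6t+12}{6-t}\le q\le 4t$ of \eqref{pq-range-small-t} becomes $\frac{3t+6}{t(6-t)}\le p\le 2$. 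Both match Theorem~\ref{decay}.

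\textbf{Main obstacle.} The only nonroutine step is the decomposition of each $X_k$, which is Katz--Tao with constant $C_k=2^kR^{-t}$, into $\approx C_k$ sets that are Katz--Tao with constant $R^{o(1)}$. The random-splitting argument above is what I would use. The factor $C_k^{1/q}$ this produces exactly cancels the gain from H\"older, and this cancellation is what makes the bound uniform in $k$.
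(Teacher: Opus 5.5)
Your proof is correct and shares the paper's skeleton: duality, unit-scale local constancy, dyadic pigeonholing of the $\mu$-mass of $R^{-1}$-balls, H\"older, and Theorem \ref{main-result} with $q=(t+1)p$ or $q=2tp$ and exponent $p'$ on the dual function. The difference is in how you handle the level set $X_k$, whose $R^{-1}$-dilate is Katz--Tao only with constant $C_k=2^kR^{-t}$.

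The paper fixes the extremal dual function first. It then pigeonholes so that $\|E_Sf\|_{L^1(B)}$ is comparable over the unit balls $B\subset X_k$, and invokes Lemma \ref{katz-tao-set-lem} once to extract a single Katz--Tao subset $X_k'$ with $|X_k'|\gtrapprox C_k^{-1}|X_k|$. Comparability gives $\int_{X_k}|E_Sf|\lessapprox C_k\int_{X_k'}|E_Sf|$, and H\"older with $|X_k'|\lesssim R^t$ finishes.

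You instead cover all of $X_k$ by $\approx C_k$ Katz--Tao pieces and sum the $L^q$ bounds. This gives the $g$-independent estimate $\|E_Sg\|_{L^q(X_k)}\lessapprox C_k^{1/q}\|g\|_{L^{p'}}$, i.e.\ a version of Theorem \ref{main-result} for sets with Katz--Tao constant $C$, which is a slightly more reusable statement. The paper's route is shorter, since it needs only one extraction and no covering.

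One loose end is in your covering step. Theorem \ref{main-result} is stated for genuine Katz--Tao sets, so a piece with constant $O(\log R)$ is not literally covered. You assert without proof that such a piece splits into $O(\log R)^{O(1)}$ genuine ones. This is true, and the cleanest justification is to iterate Lemma \ref{katz-tao-set-lem}. Removing balls never increases the Katz--Tao constant $\gamma$, and each extraction removes a $\gtrapprox\gamma^{-1}$ fraction of what remains, so any such set is a union of $\lessapprox\gamma$ Katz--Tao sets. Applied directly to $X_k$ with $\gamma=C_k$, this makes your random splitting and Chernoff bound unnecessary.
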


Compared with the estimate obtained from the classical circular
$L^2$-mean bound and H\"older's inequality, Theorem \ref{decay} gives
a genuine improvement in a nontrivial range. For
$\frac{9-\sqrt{33}}{4}\leq t<1$, the decay rate
$R^{-1/(2p)+\e}$ is stronger for
$\frac{3t+6}{-t^2+6t}\leq p<2$. For
$1\leq t<8/7$, the decay rate
$R^{-t/((t+1)p)+\e}$ improves the classical $L^2$-based estimate whenever
\[
\frac{18t}{t^2+5t+4}
\leq p<
\frac{4}{t+1}.
\]
The value $t=8/7$ is precisely the threshold at which the two decay
rates coincide.

The case $t=1$ was established by Wu \cite{Wu} by combining refined
decoupling \cite{GIOW} with a two-ends Furstenberg estimate \cite{Wang-Wu24}.
In this paper, we extend this result to the range \eqref{t-full-range}
using a new two-ends Furstenberg estimate \cite{Wang-Wu26}.

Weighted restriction estimates have been extensively studied in Fourier
analysis, in part because of their close connections with problems in
geometric measure theory. By incorporating weights or measures with
prescribed dimensional properties, these estimates provide a natural
framework for studying the interaction between Fourier extension operators
and fractal sets. They have found important applications to the decay of
spherical and circular averages of Fourier transforms of fractal measures
and, consequently, to the Falconer distance set problem; see, for instance,
\cite{Wolff, Erdogan, Shayya, DGOWWZ, LucaRogers}. These connections have
motivated substantial developments in weighted and fractal restriction
theory over the past several decades.
\bigskip

\subsection{Structure of the article}

The paper is organized as follows. In Section \ref{section-preliminary},
we collect several preliminary reductions and technical tools, including
refined decoupling, two-ends Furstenberg estimates, and the narrow--broad argument. In Section \ref{proof-main-result}, we establish the $L^2$-broad estimates, which constitute the main result of this paper.
Section \ref{proof-decay} is devoted to the proof of Theorem \ref{decay}.
Finally, Section \ref{proof-example} contains a further discussion of
$L^2$-broad estimates.

\bigskip

\subsection{Notations} We write $\#E$ for the cardinality of a finite set $E$. For a collection $\mathcal{E}$ of subsets of $\mathbb{R}^n$, we use $\bigcup \mathcal{E}$ to denote the union of all sets in $\mathcal{E}$.
We write $A\lesssim B$ or $B\gtrsim A$ if
\[
|A|\leq C |B|
\]
for some constant $C>0$. If the implicit constant depends on a parameter
$L$, we write $A\lesssim_L B$. If the implicit constant can be chosen arbitrarily small, we write $A\ll B$ or $B \gg A$.
We use the notation
$A\lessapprox B$ or $B \gtrapprox A$ to mean that
\[
A\lesssim_\eta R^\eta B
\]
for every $\eta>0$. For $r>1$, we write $\mathrm{RapDec}(r)$ for a rapidly decaying
quantity in $r$, namely, a quantity satisfying
\[
\mathrm{RapDec}(r)\lesssim_N r^{-N}
\]
for every integer $N>0$. Finally, for a set $X$, we denote by $N_d(X)$ its
$d$-neighborhood.
\medskip

\subsection{Choice of Parameters} \label{parameter}\(0 < \varepsilon \ll 1 \ll R,\ \
K = R^{\varepsilon^{10}}, \de_0 = \varepsilon^{1000}.\) With this choice of parameters, we obtain
\[
0 < \de_0 \ll \e \ll 1 \ll R^{\de_0} \ll K \ll R.
\]
\medskip

\subsection{Acknowledgments} The author is deeply grateful to Prof. Shunkun Wu and Prof. Xiaochun Li for suggesting this research topic and for many valuable and insightful discussions.

\subsection{Funding} The author acknowledges financial support from the Department of Mathematics,
University of Illinois Urbana-Champaign.

\bigskip

\section{Preliminaries}\label{section-preliminary}
In this section, we collect several preliminary reductions and technical
tools that will be used throughout the paper. These include refined
decoupling, two-ends Furstenberg estimates, and the narrow--broad argument.
We also record some auxiliary estimates and notation that will be useful
in the proof of our main result.

A standard reduction shows that Theorem \ref{main-result} is equivalent to:
\begin{theorem}\label{main-result-rewrote}
Suppose that $t$ satisfies \eqref{t-full-range}. For $f, \Phi:\mathbb{R}\to\mathbb{C}$, define the extension operator
$E_\Phi f:\mathbb{R}^2\to\mathbb{C}$ by
\[
E_\Phi f(x_1,x_2)
:=
\int_{-1}^{1}
e^{i(x_1\xi+x_2\Phi(\xi))}
f(\xi)\,d\xi.
\]
Let $X\subset B_R(0)$ with $R\gg 1$. Assume that the $R^{-1}$-dilate
of $X$ is a Katz--Tao $(R^{-1},t)$-set. Then, for every
$f\in\mathcal{S}(\mathbb{R})$ with $\operatorname{supp} f\subset[-1,1]$ and $\Phi\in C^\infty([-1,1];\mathbb{R})$ satisfying
\begin{equation}\label{phi-condition}
    \Phi(0) = \Phi'(0) = 0\qquad \text{and}\qquad \Phi''(\xi)\sim 1\quad \text{for all }\xi\in[-1,1],
\end{equation}
we have
\[
\|E_\Phi f\|_{L^q(X)}
\lesssim_{\e,p}
R^\e \|f\|_p,
\]
provided that \eqref{pq-range-small-t} holds when \eqref{small-t}, and
\eqref{pq-range-large-t} holds when \eqref{large-t}.
\end{theorem}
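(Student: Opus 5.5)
We prove the $L^q(X)$ bound for $E_\Phi f$ directly, by induction on scales combined with a narrow--broad decomposition. By interpolation with the trivial estimate $\|E_\Phi f\|_{L^\infty}\lesssim\|f\|_1$, and since $X$ is essentially a union of $\lesssim R^t$ unit balls, it suffices to prove the endpoint $L^2\to L^q$-type estimates and the other extremal point of the stated $(p,q)$ segment. So the heart of the matter is an $L^2$-weighted estimate at the critical exponent.

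Fix $K=R^{\e^{10}}$ and partition $[-1,1]$ into arcs $\tau$ of length $K^{-1}$. For each unit ball $B\subset X$ we are in one of two cases. In the \emph{narrow} case, a single $\tau$ dominates $|E_\Phi f|$ on $B$. In the \emph{broad} case, two $K^{-1}$-separated arcs $\tau_1,\tau_2$ each contribute significantly. The narrow part is handled by parabolic rescaling. Rescaling $\tau$ to $[-1,1]$ maps $X$ to a set that, after $R/K$ normalization, is again Katz--Tao with the same exponent $t$; here the condition \eqref{phi-condition} is preserved. The relation $\frac{2t}{q}+\frac1p=1$ (resp.\ $\frac{t+1}{q}+\frac1p=1$) is exactly what makes the rescaling factor summable over the $K$ arcs, so induction on $R$ closes with an $R^\e$ loss.

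For the broad part we wave-packet decompose $f=\sum_{T}f_T$ into tubes of dimensions $R^{1/2}\times R$. After dyadic pigeonholing, we may assume the following: the tubes carry comparable mass, the broad set $Y\subset X$ consists of $R^{1/2}$-balls each meeting a comparable number $M$ of tubes, and each tube meets a comparable number of such balls. Refined decoupling \cite{GIOW} gives
\[
\|E f\|_{L^{q_0}(Y)}\lessapprox \Big(\tfrac{M}{\#\mathbb T}\Big)^{\frac12-\frac1{q_0}}\Big(\sum_T\|Ef_T\|_{L^{q_0}}^2\Big)^{1/2}.
\]
Broadness then furnishes the two-ends (transversality) condition needed to invoke the two-ends Furstenberg estimate of \cite{Wang-Wu26}. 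This estimate bounds the size of the $(M)$-rich set in terms of $\#\mathbb T$, and the Katz--Tao $(R^{-1},t)$ hypothesis on $X$ (hence on $Y$ at scale $R^{1/2}$) controls how many balls a tube can meet, namely $\lesssim R^{t/2}$. Combining these with Hölder and $L^2$ orthogonality $\sum_T\|f_T\|_2^2\sim\|f\|_2^2$ yields the $L^2$-broad estimate at $q=4t$ (small $t$) or $q=2t+2$ (large $t$). The dichotomy at $t=1$ reflects which of the two Furstenberg bounds (tube count versus ball count) is sharp.

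Finally, we interpolate. The lower endpoints $q=\frac{6t+12}{6-t}$ and $\frac{18t}{t+4}$ arise from balancing the broad estimate against the narrow-case loss. The constraint $t\ge\frac{9-\sqrt{33}}4$ is precisely where this balance is consistent, namely the root of $2t^2-9t+6=0$. The main obstacle will be the broad step: we must verify that the rich-ball/tube configuration produced by pigeonholing satisfies the two-ends and Katz--Tao hypotheses of the new Furstenberg estimate uniformly across scales, and optimize the resulting exponents. I would first attempt this at $q=4t$, treating the remaining cases by the same bookkeeping.
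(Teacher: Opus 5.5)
\textbf{There is a genuine gap in the broad step.} Your architecture is the paper's: a narrow--broad decomposition, parabolic rescaling for the narrow part closing exactly on the line $\frac{\min\{2t,t+1\}}{q}+\frac1p=1$ with $q\ge p$, and refined decoupling plus the two-ends Furstenberg estimate for the broad part. But the broad step, as you set it up, does not reach the stated range.

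The narrow part is fine. The one caveat is that the rescaled set is not itself Katz--Tao: the paper pigeonholes $|X\cap B|\sim\mu$ over $K\times K^2$ rectangles and passes to a Katz--Tao subfamily, losing a factor $K^{\min\{2t,t+1\}}$ that the scaling relation absorbs exactly. So the narrow part incurs no loss anywhere on the scaling line. Consequently the lower endpoints $\frac{6t+12}{6-t}$ and $\frac{18t}{t+4}$ cannot come from ``balancing against the narrow-case loss.'' They are dictated entirely by the range of $q$ for which $\|\BR_A E_\Phi f\|_{L^q(X)}\lesssim R^\e\|f\|_2$ holds. The upper endpoints $4t$ and $2t+2$ are merely where $p=2$; beyond them $\|f\|_2\lesssim\|f\|_p$ is no longer available.

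So the broad bound is needed at the \emph{lower} endpoint, which is the hard case. A broad bound at $q=4t$ (resp.\ $2t+2$) yields only the $p=2$ point; for $t\ge1$ that bound already follows from the bilinear $L^4$ estimate. Interpolating with $L^1\to L^\infty$ then only gives $q\ge 4t$ (resp.\ $q\ge 2t+2$).

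\textbf{Missing ingredient.} What you need is the weighted $L^2$-broad estimate $\|\BR_A E_\Phi f\|_{L^2(X)}\lesssim R^\e|X|^{\alpha}\|f\|_2$ with $\alpha=\frac{2t}{3t+6}$ (resp.\ $\frac{4t-2}{9t}$). Pigeonholing the level sets of $\BR_AE_\Phi f$ turns it into the $L^q$ bound for $\frac1q\le\frac12-\alpha$. The paper gets this estimate by interpolating three bounds.

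\emph{Two of the three.} Refined decoupling and the standard $L^2$-orthogonality bound are each combined with the lower bound on $\#\mathcal{T}'$ from the dual Furstenberg estimate. Together they only let you eliminate the multiplicity $M$. What remains is the pure power $r^{\frac{2t}{6-t}}$ (resp.\ $r^{\frac{4t-2}{t+4}}$), with no dependence on $|X|$, which is useless on level sets of small measure.

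\emph{The hairbrush bound.} What converts this power into a power of $|X|$ is $\|\BR_AE_\Phi f\|_{L^2(X)}^2\lesssim r^{O(\e^2)}|X|r^{-1/2}\|f\|_2^2$, which is absent from your ingredient list. It requires the shading of each tube to be two-ends, meaning spread over many $r^{1-\e^2}$-segments. The paper arranges this by a separate two-ends reduction, using induction on scales in the non-two-ends case $\beta\le r^{\e^4}$; broadness does not supply it.

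\emph{What broadness does supply.} It gives only the direction-spreading hypothesis $\#\mathcal{T}_\si(Q)\lesssim\de^{\e_2}\#\mathcal{T}(Q)$ of the dual Furstenberg estimate. Even that requires $A$-broadness with $A\sim R^{\e^{100}}$ caps, not your bilinear dichotomy with two transverse arcs, which only yields a constant-fraction bound on $\#\mathcal{T}_\si(Q)$.
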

Throughout the remainder of the paper, we assume that
$f\in\mathcal{S}(\mathbb{R})$ with
$\operatorname{supp} f\subset[-1,1]$, that
$\Phi\in C^\infty([-1,1];\mathbb{R})$ satisfies
\eqref{phi-condition}, and that $X$ is a union of unit balls.
\subsection{Wave Packet Decomposition and Refined Decoupling}
\medskip
We now recall the wave packet decomposition at scale $r \gg 1$ and record
the properties that will be needed later. Since this construction is
standard, we only describe the relevant setup.

Let $\Theta$ be a finitely overlapping family of intervals of length
$r^{-1/2}$ covering $[-1,1]$. Choose a smooth partition of unity
$\{\vp_\theta\}_{\theta\in\Theta}$ subordinate to the enlarged intervals
$\{2\theta\}_{\theta\in\Theta}$, so that
\[
\supp(\vp_\theta)\subset 2\theta,
\qquad
\sum_{\theta\in\Theta}\vp_\theta(\xi)=1
\quad\text{for }\xi\in[-1,1].
\]
For a function $f$ supported in $[-1,1]$, set
\[
f_\theta:=f\vp_\theta.
\]

To further localize each $f_\theta$, let $\cv$ be a finitely overlapping
cover of $\mathbb{R}$ by intervals $v$ of length $r^{1/2}$.
For each $v\in\cv$, choose a smooth function $\psi_v$ adapted to $v$
such that
\[
\supp(\wh{\psi}_v)
\subset [-r^{-1/2},r^{-1/2}]
\]
and
\[
\sum_{v\in\cv}\psi_v=1
\qquad\text{on }\mathbb{R}.
\]
We then decompose
\begin{equation}
\label{wave-packet-decomposition}
f
=
\sum_{\theta\in\Theta}
\sum_{v\in\cv}
(f\vp_\theta)\ast\wh{\psi}_v
=
\sum_{(\theta,v)\in\Theta\times\cv}
f_{\theta,v},
\end{equation}
where
\[
f_{\theta,v}
:=
(f\vp_\theta)\ast\wh{\psi}_v.
\]

Each pair $(\theta,v)$ is associated with a tube in the physical space.
Let $c_\theta$ and $c_v$ denote the centers of $\theta$ and $v$,
respectively, and define
\[
T_{\theta,v}
:=
\left\{
(x_1,x_2)\in B_r(0):
\left|
x_1-c_v+x_2\nabla\Phi(c_\theta)
\right|
\leq r^{1/2+\de_0}
\right\}.
\]
Thus $T_{\theta,v}$ has dimensions
$r^{1/2+\de_0}\times r$ and is oriented according to the frequency
interval $\theta$. We write
\[
V(\theta):=(\Phi'(c_\theta),-1)
\]
for the corresponding direction vector.

For a fixed $\theta\in\Theta$, let
\[
{\ZT}(\theta)
:=
\left\{
T_{\theta,v}:
v\in\cv,\;
T_{\theta,v}\cap B_r(0)\neq\varnothing
\right\},
\]
and set
\[
{\ZT}
:=
\bigcup_{\theta\in\Theta}{\ZT}(\theta).
\]
Whenever $T=T_{\theta,v}\in {\ZT}$, we use the shorthand
\[
f_T:=f_{\theta,v},
\qquad
\theta_T:=\theta.
\]
The wave packet decomposition immediately yields the following properties.
\begin{lemma} \label{wpt}
	The wave packet decomposition satisfies the following properties.
	\begin{enumerate}\item $E_\Phi f=\sum_{T\in\ZT}E_\Phi f_{T}$.
		\item $\supp f_{T}\subset 3\theta$ when $T$ has direction $V(\theta)$.
		\item $\{V(\theta)\}_{\theta\in\Theta}$ are $r^{-1/2}$-separated.
	\end{enumerate}    
\end{lemma}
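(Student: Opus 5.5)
The three properties are read off directly from the construction, and I will verify them in order.

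For (1), I use the two partitions of unity. Since $\sum_\theta \vp_\theta = 1$ on $[-1,1]$ and $\supp f\subset[-1,1]$, we have $f=\sum_\theta f\vp_\theta$. Since $\sum_v\psi_v=1$ on $\mathbb R$, taking Fourier transforms gives $\sum_v \wh{\psi}_v=\delta_0$ in the sense of distributions. Hence $f\vp_\theta=\sum_v (f\vp_\theta)\ast\wh{\psi}_v$, which is exactly \eqref{wave-packet-decomposition}. The sum over $v$ converges in Schwartz topology because $f\vp_\theta$ is Schwartz, and $\Theta$ is finite. Linearity of $E_\Phi$ then gives $E_\Phi f=\sum_{\theta,v}E_\Phi f_{\theta,v}$.

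One bookkeeping point needs care. The set $\ZT$ contains only those $T_{\theta,v}$ meeting $B_r(0)$, while the decomposition sums over all $v$. So the identity should be read either with the convention that every pair $(\theta,v)$ is indexed, or on $B_r(0)$ up to a $\mathrm{RapDec}(r)\|f\|_2$ error. In the latter reading, for $x\in B_r(0)$ the packets $E_\Phi f_{\theta,v}$ with $T_{\theta,v}\cap B_r(0)=\varnothing$ are rapidly decaying by the standard nonstationary-phase localization. This is the only place where any genuine estimate enters, and it is standard. I would state (1) with this convention and remark on the error term.

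For (2), the support statement follows from the support properties of the two factors. By construction $\supp(f\vp_\theta)\subset 2\theta$ and $\supp\wh{\psi}_v\subset[-r^{-1/2},r^{-1/2}]$. Therefore
\[
\supp\bigl((f\vp_\theta)\ast\wh{\psi}_v\bigr)\subset 2\theta+[-r^{-1/2},r^{-1/2}].
\]
Since $|\theta|=r^{-1/2}$, the interval $2\theta$ has length $2r^{-1/2}$, and adding $r^{-1/2}$ on each side gives an interval of length $4r^{-1/2}>3|\theta|$. So the naive bound is actually $4\theta$, not $3\theta$. The clean fix is to rescale the $\psi_v$ so that $\supp\wh{\psi}_v\subset[-\tfrac12 r^{-1/2},\tfrac12 r^{-1/2}]$, which yields exactly $3\theta$. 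Otherwise the statement should be read with $3$ replaced by a harmless constant. I would adopt the rescaling convention.

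For (3), the directions are separated because $\Phi'$ is bi-Lipschitz. Since $V(\theta)=(\Phi'(c_\theta),-1)$, we have
\[
|V(\theta)-V(\theta')|=|\Phi'(c_\theta)-\Phi'(c_{\theta'})|.
\]
By the mean value theorem and $\Phi''\sim1$ from \eqref{phi-condition}, this is comparable to $|c_\theta-c_{\theta'}|$. For a finitely overlapping cover, I choose the centers $r^{-1/2}$-separated (for example, a lattice of step $r^{-1/2}$), so $|c_\theta-c_{\theta'}|\gtrsim r^{-1/2}$ for $\theta\ne\theta'$. This gives $r^{-1/2}$-separation of the directions up to a constant. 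None of the three steps presents a real obstacle; the only subtleties are the constant in (2) and the truncation to tubes meeting $B_r(0)$ in (1).
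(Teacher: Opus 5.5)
Your proposal is correct and takes the same approach as the paper, which gives no argument beyond saying the three properties follow immediately from the construction.

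Your caveats are accurate and worth keeping:
\begin{itemize}
\item With the stated supports, the Minkowski-sum bound only gives $\operatorname{supp} f_T\subset 4\theta$. One should either halve the Fourier support of $\psi_v$ or accept a larger constant.
\item Identity (1) is exact only when all pairs $(\theta,v)$ are summed. Otherwise it holds on $B_r(0)$ up to a $\mathrm{RapDec}(r)\|f\|_2$ error.
\item The separation in (3) uses a cover of $[-1,1]$ with $r^{-1/2}$-separated centers, which finite overlap alone does not guarantee.
\end{itemize}
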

It is standard that each wave packet $E_\Phi f_T$ is essentially
concentrated on $T$. We also recall the
$L^2$ almost-orthogonality of the wave packet decomposition.

\begin{lemma}\label{spatial-concentration}
Let $T\in\mathbb{T}$. For every $x\in B_R(0)\setminus T$, we have
\[
    E_\Phi f_T(x)
    \lesssim
    \mathrm{RapDec}(R)\,\|f_T\|_2.
\]
\end{lemma}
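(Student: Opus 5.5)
The plan is a non-stationary phase argument. Fix $T=T_{\theta,v}$ with $f_T=(f\vp_\theta)\ast\wh{\psi}_v$ and $x\in B_R(0)\setminus T$; here $r$ is the wave-packet scale, and the natural case to treat is $r=R$, so that $T\subset B_R(0)$ has width $R^{1/2+\de_0}$. Since $\wh{\psi}_v$ is supported in $[-R^{-1/2},R^{-1/2}]$ and $f\vp_\theta$ is supported in $2\theta$, the function $f_T$ is supported in $3\theta$; this is item (2) of Lemma \ref{wpt}. I would also use that $\wh{f_T}$, the inverse Fourier transform on the line, is essentially $(\widecheck{f\vp_\theta})\psi_v$ up to normalization, and is therefore concentrated on $v$. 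I will write $f_T=(f\vp_\theta)\ast\wh{\psi}_v$ and insert it directly into the extension operator:
\[
E_\Phi f_T(x)=\int (f\vp_\theta)(\eta)\Big[\int e^{i(x_1\xi+x_2\Phi(\xi))}\wh{\psi}_v(\xi-\eta)\,d\xi\Big]d\eta .
\]

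The next step is to analyse the inner integral as an oscillatory integral in $\xi$, localized to $|\xi-\eta|\le R^{-1/2}$ with $\eta\in 2\theta$. Write $\wh{\psi}_v(\zeta)=e^{-ic_v\zeta}\,R^{1/2}\wh{\psi}_0(R^{1/2}\zeta)$, where $\psi_0$ is a fixed bump and $\wh{\psi}_0$ is smooth with compact support. Substituting $\xi=\eta+R^{-1/2}s$ turns the inner integral into
\[
\int e^{i\phi(s)}\wh{\psi}_0(s)\,ds,\qquad \phi(s)=x_1R^{-1/2}s+x_2\big(\Phi(\eta+R^{-1/2}s)-\Phi(\eta)\big)-c_vR^{-1/2}s
\]
(times a unimodular factor). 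The derivative is $\phi'(s)=R^{-1/2}\big(x_1-c_v+x_2\Phi'(\eta+R^{-1/2}s)\big)$. Since $|x_2|\le R$ and $|\Phi'(\eta+R^{-1/2}s)-\Phi'(c_\theta)|\lesssim R^{-1/2}$, we get $|x_2(\Phi'(\cdot)-\Phi'(c_\theta))|\lesssim R^{1/2}$. For $x\notin T$ we have $|x_1-c_v+x_2\Phi'(c_\theta)|> R^{1/2+\de_0}$, so $|\phi'(s)|\gtrsim R^{\de_0}$ on the support. (The sign convention for $\nabla\Phi(c_\theta)$ in the definition of $T$ has to be matched to the sign here; this is cosmetic.)

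For the higher derivatives, $\phi^{(k)}(s)=x_2R^{-k/2}\Phi^{(k)}(\cdot)$, which is $O(1)$ for $k\ge2$ because $|x_2|\le R$. Repeated integration by parts, with the operator $(i\phi')^{-1}\partial_s$, then gives $|{\rm inner}|\lesssim_N R^{-N\de_0}$ uniformly in $\eta$. Since $\de_0$ is a fixed constant, this is $\mathrm{RapDec}(R)$. Finally,
\[
|E_\Phi f_T(x)|\le \mathrm{RapDec}(R)\,\|f\vp_\theta\|_{L^1}\lesssim \mathrm{RapDec}(R)\,\|f\vp_\theta\|_2 .
\]

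The main obstacle is the last step: the natural bound is in terms of $\|f\vp_\theta\|_2$, not $\|f_T\|_2$, and one really wants to control $\|f_T\|_2$ itself. To handle this, I would either state the estimate with $\|f_{\theta}\|_2$, which suffices in applications, or argue as follows to get $\|f_T\|_2$. Use a slightly larger bump $\tilde\psi_v$ equal to $1$ on the support of $\psi_v$ and with $\wh{\tilde\psi}_v$ supported in $2R^{-1/2}$, and write $f_T=f_T\ast\wh{\tilde\psi}_v$ up to a $\mathrm{RapDec}$ error. Then run the same computation with $f_T$ in place of $f\vp_\theta$, using Cauchy--Schwarz on the support $3\theta$.
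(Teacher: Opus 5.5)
\paragraph{Assessment.} The paper gives no proof of this lemma; it only says the concentration of wave packets is standard. Your non-stationary phase computation is the standard argument and is correct as far as it goes, but it proves the bound with $\|f\vp_\theta\|_2$ on the right, not $\|f_T\|_2$. The proposed upgrade to $\|f_T\|_2$ fails, and the statement as printed appears to be false in general.

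\paragraph{What your argument proves.} After $\xi=\eta+R^{-1/2}s$ you have $|\phi'|\gtrsim R^{\de_0}$ and $|\phi^{(k)}|\lesssim_k 1$ for $k\geq 2$ on the support of $\wh{\psi}_0$. So the inner integral is $O_N(R^{-N\de_0})$ uniformly in $\eta\in 2\theta$, and
\[
|E_\Phi f_T(x)|\leq \mathrm{RapDec}(R)\,\|f\vp_\theta\|_1\lesssim \mathrm{RapDec}(R)\,R^{-1/4}\|f\vp_\theta\|_2\leq \mathrm{RapDec}(R)\,\|f\|_2 .
\]
To my knowledge this is the usual formulation of the lemma, with $\|f\|_2$ on the right (e.g.\ in \cite{G}). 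One convention you use silently: the $\xi$-integral runs over all of $\ZR$ with $\Phi$ extended smoothly. That is needed, because with the literal sharp cutoff $\int_{-1}^{1}$ in $E_\Phi$, caps near $\pm1$ (where $f_T$ spills past $\pm1$) produce boundary terms that are not rapidly decreasing.

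\paragraph{Why the fix does not work.} Since $\wh{\psi}_v$ is compactly supported, $\psi_v$ is real-analytic and not compactly supported. A $\tilde\psi_v$ with compactly supported Fourier transform that equals $1$ on an interval is identically $1$. So the identity $f_T=f_T\ast\wh{\tilde\psi}_v$ is unavailable. Any approximate version leaves an error of the form $(f\vp_\theta)\ast\big(\psi_v(1-\tilde\psi_v)\big)^{\wedge}$ (up to normalization), which is controlled by $\|f\vp_\theta\|_2$, not by $\|f_T\|_2$. The argument is therefore circular.

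\paragraph{Why the printed bound fails.} At $x_2=0$ one has $E_\Phi f_T(x_1,0)=c\,E_\Phi(f\vp_\theta)(x_1,0)\,\psi_v(x_1)$, and by Plancherel $\|f_T\|_2\sim\|E_\Phi f_T(\cdot,0)\|_{L^2(\ZR)}$.
\begin{itemize}
\item Take $f\vp_\theta$ to be a modulated bump in $\theta$ such that $E_\Phi(f\vp_\theta)(\cdot,0)$ is concentrated near a point $y_0$ with $|y_0-c_v|\sim R^{3/4}$.
\item Arrange that its tails decay faster than those of $\psi_v$. This is possible, e.g., if $\wh{\psi}_0$ is the standard $e^{-1/(1-\zeta^2)}$ bump and $f\vp_\theta$ is chosen in a smaller Gevrey class.
\item Then $f_T$ is tiny, but the bulk of the $L^2$ mass of $E_\Phi f_T(\cdot,0)$ sits near $y_0$, far outside $T$.
\item At $x=(y_0,0)$, with $y_0$ moved slightly off the zeros of $\psi_v$, one gets $|E_\Phi f_T(x)|\gtrsim R^{-O(1)}\|f_T\|_2$.
\end{itemize}
So a low-amplitude wave packet need not be concentrated on its tube relative to its own size, and the lemma as printed is stronger than what is true. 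The right choice is your first option: state the bound with $\|f\vp_\theta\|_2$ (or $\|f\|_2$) on the right. That is all that wave packet arguments such as the paper's actually use.
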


\begin{lemma}\label{l2-orthogonalty}
Let $\mathbb{T}'\subset\mathbb{T}$. Then
\[
    \left\|
        \sum_{T\in\mathbb{T}'} f_T
    \right\|_2^2
    \sim
    \sum_{T\in\mathbb{T}'} \|f_T\|_2^2.
\]
\end{lemma}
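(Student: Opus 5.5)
The plan is to prove the two-sided bound $\|\sum_{T\in\mathbb{T}'} f_T\|_2^2\sim\sum_{T\in\mathbb{T}'}\|f_T\|_2^2$ by splitting the double sum over pairs $(\theta,v)$ into the two localizations. The frequency localization is by $\vp_\theta$, and the spatial localization on the Fourier side is by multiplication by $\psi_v$. By Plancherel on $\mathbb{R}$, $f_{\theta,v}=(f\vp_\theta)\ast\wh{\psi}_v$ has Fourier transform equal, up to normalization and reflection, to $\wh{f\vp_\theta}\cdot\psi_v$. Hence
\[
\Bigl\|\sum_{(\theta,v)\in\mathbb{T}'} f_{\theta,v}\Bigr\|_2
\sim \Bigl\|\sum_{(\theta,v)\in\mathbb{T}'} \wh{f\vp_\theta}\,\psi_v\Bigr\|_2 .
\]
We will, as is standard, take $\psi_v\ge 0$ with $\sum_v\psi_v=1$, and the $\psi_v$ essentially supported on $v$ (up to rapidly decaying tails, which only contribute $\mathrm{RapDec}(r)\|f\|_2$ errors).

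\textbf{Step 1: orthogonality in $\theta$.} For fixed $\theta$, set $g_\theta:=\sum_{v:(\theta,v)\in\mathbb{T}'} f_{\theta,v}$. By Lemma \ref{wpt}(2), $g_\theta$ is supported in $3\theta$. The intervals $3\theta$, $\theta\in\Theta$, are finitely overlapping, so pointwise
\[
\Bigl|\sum_\theta g_\theta\Bigr|^2\lesssim\sum_\theta|g_\theta|^2 .
\]
For the reverse inequality we group $\Theta$ into $O(1)$ subfamilies in which the $3\theta$ are pairwise disjoint; within each subfamily the cross terms vanish identically. Combining the subfamilies gives the two-sided bound $\|\sum_\theta g_\theta\|_2^2\sim\sum_\theta\|g_\theta\|_2^2$.

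\textbf{Step 2: orthogonality in $v$.} For fixed $\theta$, by Plancherel $\|g_\theta\|_2^2$ is comparable to $\int|\wh{f\vp_\theta}|^2\,|\sum_{v\in\mathcal V_\theta}\psi_v|^2$, where $\mathcal V_\theta$ is the set of $v$ with $(\theta,v)\in\mathbb{T}'$. Since the $\psi_v$ are nonnegative and finitely overlapping, $|\sum_{v}\psi_v|^2\sim\sum_v\psi_v^2$ up to the $O(1)$ overlap constant, at least where the main parts are concerned. This yields $\|g_\theta\|_2^2\sim\sum_v\|f_{\theta,v}\|_2^2$. The lower bound here uses positivity: cross terms $\psi_v\psi_{v'}\ge 0$. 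The upper bound uses Cauchy--Schwarz together with the bounded overlap.

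\textbf{Main obstacle.} The step I expect to be delicate is the lower bound in Step 2. If the $\psi_v$ are not exactly compactly supported (indeed compact Fourier support of $\psi_v$ forbids compact spatial support), then the Schwartz tails of distant $v$ interact. Positivity of $\psi_v$ handles the lower bound cleanly, and I would choose $\psi_v=\psi(\cdot-c_v)$ with $\psi\ge0$ to secure it. The tails then only affect the upper bound, through a Schur-test bound: the kernel $\int\psi_v\psi_{v'}\lesssim(1+|c_v-c_{v'}|r^{-1/2})^{-N}$ is summable in $v'$ uniformly in $v$. I would accept this as giving the stated $\sim$ with absolute constants, which is all that Lemma \ref{l2-orthogonalty} asserts.
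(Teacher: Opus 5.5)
Your Step 1 reverse inequality is not established. Splitting $\Theta$ into $O(1)$ subfamilies $\Theta_1,\dots,\Theta_m$ with pairwise disjoint $3\theta$'s gives $\|G_j\|_2^2=\sum_{\theta\in\Theta_j}\|g_\theta\|_2^2$ for $G_j:=\sum_{\theta\in\Theta_j}g_\theta$. But $\sum_\theta g_\theta=\sum_j G_j$, and the $G_j$ have overlapping supports, so $\|\sum_j G_j\|_2^2\gtrsim\sum_j\|G_j\|_2^2$ is exactly the inequality you set out to prove. Bounded overlap of supports only ever yields the upper bound: take $g_{\theta_1}=h=-g_{\theta_2}$ with $\supp h\subset 3\theta_1\cap 3\theta_2$.

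Positivity, which correctly gives your Step 2 lower bound, does not transfer to Step 1. The inequality $|f|^2(\sum_{\theta\in\Theta'}\vp_\theta)^2\ge |f|^2\sum_{\theta\in\Theta'}\vp_\theta^2$ helps only when every $\theta$ keeps all $v$. Your $g_\theta=(f\vp_\theta)\ast\sum_{v\in\cv_\theta}\wh\psi_v$ are no longer aligned in this way. Concretely, take $\mathbb{T}'=\{(\theta_1,v),(\theta_2,v)\}$ with $\theta_1,\theta_2$ adjacent. Let $f$ be supported where $\vp_{\theta_1}+\vp_{\theta_2}\equiv 1$, with $\wh f$ concentrated at distance $Dr^{1/2}$ from $v$. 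The two packets add up to $f\ast\wh\psi_v$, whose norm only sees the tails of $\wh f$ and $\psi_v$. Each packet separately sees the tail of $\wh{\vp_{\theta_i}}$ at distance $Dr^{1/2}$. So the packets can nearly cancel, and for moderate $D$ these Schwartz tails are not $\mathrm{RapDec}(r)$. The lower bound for an arbitrary $\mathbb{T}'$ therefore cannot be extracted from supports and overlap, and your argument does not prove it.

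The Schur-test justification of the upper bound in Step 2 also fails as written. The cross terms are weighted integrals $\int|\wh{f\vp_\theta}|^2\psi_v\psi_{v'}$ (up to normalization and reflection). Decay of the unweighted $\int\psi_v\psi_{v'}$ does not give $|\langle f_{\theta,v},f_{\theta,v'}\rangle|\lesssim K(v,v')\|f_{\theta,v}\|_2\|f_{\theta,v'}\|_2$ with summable $K$. The weight may sit where $\psi_v$ and $\psi_{v'}$ are both in their tails and comparable, and then there is no gain over Cauchy--Schwarz.

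What can be proved is the following. For each point, split the $v$'s into the $\lesssim r^{\de_0}$ ones within distance $r^{1/2+\de_0}$ (use Cauchy--Schwarz) and the rest (which contribute $\mathrm{RapDec}(r)$). This gives $\|\sum_{T\in\mathbb{T}'}f_T\|_2^2\lesssim r^{\de_0}\sum_{T\in\mathbb{T}'}\|f_T\|_2^2+\mathrm{RapDec}(r)\|f\|_2^2$. For the full family, your positivity choices give $\sum_{T\in\mathbb{T}}\|f_T\|_2^2\le\|f\|_2^2$, because $\sum_\theta\vp_\theta^2\le 1$ and $\sum_v\psi_v^2\le 1$.

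The paper gives no proof of the lemma; it recalls it as standard. These two facts are precisely what its only application requires, namely $\sum_k\|f_k\|_2^2\lesssim r^{\e^4}\|f\|_2^2$ in the non-two-ends case, and the $r^{\de_0}$ loss is harmless there.
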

Decoupling theory provides a powerful framework for controlling the
$L^p$ norm of an oscillatory integral by decomposing its frequency
support into smaller pieces and estimating the contributions from the
corresponding wave packets. In the present setting, we decompose the
frequency interval into caps of length $r^{-1/2}$ and apply decoupling
to relate the global behavior of $E_\Phi f$ to that of the localized
pieces $E_\Phi f_\theta$. Refined decoupling further incorporates
information on the spatial distribution of the associated wave packets,
and therefore yields stronger estimates when the wave packets satisfy
additional concentration or multiplicity conditions. The refined
decoupling inequality proved in \cite{GIOW} may be viewed as a
strengthening of the classical decoupling inequality.
\begin{theorem}
\label{refined-decoupling-thm}
Let $f\in\mathcal{S}(\mathbb{R})$ with $\operatorname{supp} f\subset[-1,1]$ If $f$ is a sum of wave packets $f=\sum_{T\in\ZT}f_T$ so that $\|f_T\|_2$ are the same up to a constant multiple for all $T\in\ZT$. 
Let $X$ be a union of $r^{1/2}$-balls in $B_r(0)$ such that each $r^{1/2}$-ball $Q\subset X$ intersects to at most $M$ tubes from $\ZT$. 
Then 
\begin{equation}
\label{refined-decoupling}
    \|E_\Phi f\|_{L^6(X)}\lessapprox  M^{1/3}\Big(\sum_{T\in\ZT}\|Ef_T\|_{L^6(w_{B_r})}^6\Big)^{1/6}.
\end{equation}
Here $w_{B_r}$ is a weight that is $\sim1$ on $B_r(0)$ and decreases rapidly outside $B_r(0)$. 
\end{theorem}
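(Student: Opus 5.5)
We would follow the standard induction-on-scales proof of refined decoupling from \cite{GIOW}, using the Bourgain--Demeter $\ell^2$ decoupling theorem for the parabola at each scale and dyadic pigeonholing to exploit the multiplicity bound $M$. The target is an estimate of the form
\[
\|E_\Phi f\|_{L^6(Q)}\lessapprox \Big(\sum_{T\cap Q\neq\emptyset}\|E_\Phi f_T\|_{L^6(w_Q)}^2\Big)^{1/2}
\]
on each $r^{1/2}$-ball $Q\subset X$, possibly with intermediate scales. From this, H\"older in the sum over the at most $M$ tubes meeting $Q$ gives
\[
\Big(\sum_{T\cap Q\ne\emptyset}\|E_\Phi f_T\|_{L^6(w_Q)}^2\Big)^{1/2}\le M^{\frac12-\frac16}\Big(\sum_{T\cap Q\ne\emptyset}\|E_\Phi f_T\|_{L^6(w_Q)}^6\Big)^{1/6}=M^{1/3}\Big(\sum_{T\cap Q\ne\emptyset}\|E_\Phi f_T\|_{L^6(w_Q)}^6\Big)^{1/6}.
\]
Raising to the sixth power and summing over $Q\subset X$, the bounded overlap of the weights $w_Q$ gives $\sum_Q w_Q\lesssim w_{B_r}$, so the right side is controlled by $M^{1/3}(\sum_T\|E_\Phi f_T\|_{L^6(w_{B_r})}^6)^{1/6}$. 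This is \eqref{refined-decoupling}.

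The main step is justifying the localized decoupling on a ball $Q$ of radius $r^{1/2}$, since the wave packets at scale $r$ have frequency width $r^{-1/2}$, which is exactly the uncertainty scale of $Q$. On such a ball, plain decoupling into $r^{-1/2}$-caps is trivial: it is just the triangle inequality plus Cauchy--Schwarz over caps, and that loses. So one must argue multiscale. For scales $s=r^{k\epsilon}$, $1\le s\le r^{1/2}$, decompose $f$ into wave packets adapted to caps $\tau$ of length $s^{-1}$ and tubes of size $s\times s^2$. Apply Bourgain--Demeter decoupling at scale $s\to s\cdot r^{\epsilon}$, which loses only $r^{O(\epsilon^2)}$ per step. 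Then, as in \cite{GIOW}, use the crucial observation that a coarse tube $\tilde T$ at scale $s$ contains only those fine tubes $T$ from the original family that pass through it. After parabolic rescaling of the cap $\tau$, the restriction $f_\tau\big|_{\tilde T}$ is again a sum of wave packets of the rescaled problem, with the same multiplicity structure.

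To control the multiplicity through the induction, first pigeonhole. Since all $\|f_T\|_2$ are comparable, we may dyadically restrict to a subcollection of tubes, and to a subset of $X$ on which $|E_\Phi f|$ is roughly constant and the number of tubes through each $r^{1/2}$-ball is $\sim M'\le M$, losing only $\log r$ factors. The $L^2$ orthogonality (Lemma \ref{l2-orthogonalty}) together with the spatial concentration of wave packets (Lemma \ref{spatial-concentration}) lets us compare $\|E_\Phi f_T\|_{L^6}$ with $r^{-1/3}\|f_T\|_2\cdot|T|^{1/6}$-type quantities uniformly. Hence the final $\ell^6$ sum over tubes is equivalent to $(\#\mathbb T)^{1/6}$ times a common value, and the intermediate $\ell^2$ sums can be converted by H\"older, gaining exactly the factor $M^{1/3}$ where the $\ell^2\to\ell^6$ passage occurs.

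The expected obstacle is bookkeeping the induction. At each intermediate scale, the number of coarse packets that intersect a small ball must be bounded in terms of $M$ and the counts of fine tubes, without accumulating losses beyond $r^{O(\epsilon)}$. The plan is to run $O(1/\epsilon)$ steps, each losing $C_\epsilon r^{\epsilon^2}$, and to use the incidence-based H\"older step only once, at the final scale $r^{1/2}$. There the hypothesis that each $Q$ meets at most $M$ tubes applies directly.
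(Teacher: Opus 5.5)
Your reduction rests on a local inequality that is false. The first paragraph needs $\|E_\Phi f\|_{L^6(Q)}\lessapprox\big(\sum_{T\cap Q\neq\emptyset}\|E_\Phi f_T\|_{L^6(w_Q)}^2\big)^{1/2}$ on a single $r^{1/2}$-ball $Q$, and this fails with or without intermediate scales. Take $f\approx h$ on $[-1,1]$, and for each of the $\sim r^{1/2}$ caps $\theta$ keep only the wave packet through the center of $Q$. There is no cancellation near the center, so $|E_\Phi f|\gtrsim h$ on a unit ball there and $\|E_\Phi f\|_{L^6(Q)}\gtrsim h$. On the other hand $|E_\Phi f_T|\lesssim hr^{-1/2}$, so the right side is $\lesssim (r^{1/2}\cdot h^2r^{-1}\cdot r^{1/3})^{1/2}=hr^{-1/12}$.

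Equivalently, your argument would prove $\|E_\Phi f\|_{L^6(X)}\lessapprox M^{1/3}\big(\sum_T\|E_\Phi f_T\|_{L^6(w_X)}^6\big)^{1/6}$ with norms localized to $X$. This fails for $X=Q$ in the same example. The theorem genuinely needs the full-tube norms $\|E_\Phi f_T\|_{L^6(w_{B_r})}$; in this example they give exactly $M^{1/3}(\sum_T\|E_\Phi f_T\|_6^6)^{1/6}\sim h$, so the theorem is sharp there.

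Your multiscale sketch does not repair this. If you chain $\ell^2$ decouplings through all scales and apply H\"older only once at the end, then summing the localized inequalities over space forces Minkowski. That yields only the global $\ell^2$ decoupling on $B_r$, after which H\"older costs $(\#\theta)^{1/3}$, not $M^{1/3}$. Also, the rescaled subproblems do not have ``the same multiplicity structure'' as the original one.

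The missing idea is to factor $M$ into a part spent by H\"older at the cube scale and a part handed to induction. This is how \cite{GIOW} argues; the paper only quotes the result.

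\begin{itemize}
\item On each $r^{1/2}$-cube, decouple only down to caps $\tau$ of length $r^{-1/4}$, the natural scale for that cube.
\item For each $\tau$, group the tubes with $\theta_T\subset\tau$ into boxes $\square$ of size $r^{1/2}\times r^{3/4}$ oriented along $\tau$. Their directions differ by $\lesssim r^{-1/4}$, so every such tube meeting $\square$ passes through a fixed enlargement of every $r^{1/2}$-cube of $\square$.
\item After dyadic pigeonholing, each relevant box contains $\sim M_1$ tubes and each relevant cube lies in $\sim M_2$ nonempty boxes. Hence $M_1M_2\lesssim M$.
\item H\"older over the $\lesssim M_2$ active caps on $Q$ gives $\|E_\Phi f\|_{L^6(Q)}^6\lessapprox M_2^2\sum_\tau\|E_\Phi f_\tau\|_{L^6(w_Q)}^6$. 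This is an $\ell^6$ sum, so it can now be added over $Q$.
\item Parabolic rescaling of $\tau$ sends $r^{3/4}\times r$ slabs to $r^{1/2}$-balls, the tubes to scale-$r^{1/2}$ wave packets, and the boxes to $r^{1/4}$-cubes. The induction hypothesis at scale $r^{1/2}$ with multiplicity $M_1$ then gives $\sum_\tau\|E_\Phi f_\tau\|_{L^6}^6\lessapprox M_1^2\sum_T\|E_\Phi f_T\|_6^6$ with full-tube norms.
\end{itemize}

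Multiplying the two bounds gives $(M_1M_2)^2\lesssim M^2$. The $O(\log\log r)$ induction steps lose only $\lessapprox$ factors.
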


\subsection{Two-ends Furstenberg Estimates}
In this subsection, we introduce a recent two-ends Furstenberg estimate
established in \cite{Wang-Wu26}. For our purposes, we recast the result in a
form suited to the present setting and state the precise version needed
in the proof of our main result.

We begin by recalling the two-ends condition and . 
\begin{definition}
Let $L$ be a family of lines in $\ZR^2$ and let $\de\in(0,1)$.
A {\bf shading} $Y:L\to B^2(0,1)$ is an assignment such that $Y(\ell)\subset N_\de(\ell)\cap B_1(0)$ is a union of finite-overlapping $\de$-balls in $\ZR^2$ for all $\ell\in L$.
We say $Y$ is {\bf $\la$-dense}, if $|Y(\ell)|\geq \la |N_\de(\ell)|$.
\end{definition}

\begin{definition}
Let $\de\in(0,1)$ and let $(L,Y)_\delta$ be a set of lines and shading.
Let $0<\e_2<\e_1<1$.
We say $Y$ is {\bf $(\e_1 ,\e_2)$-two-ends} if for all $\ell\in L$ and all $\de\times\de^{\e_1}$-tubes $J\subset N_\de(\ell)$, there exists a constant $C$ such that 
\begin{equation}
\nonumber
    |Y(\ell)\cap J|\leq C\de^{\e_2} |Y(\ell)|.
\end{equation}
\end{definition}

We next recall a recent two-ends Furstenberg estimate established in
\cite{Wang-Wu26}.

\begin{definition}
\label{gamma-Y-def}
Let $\de\in(0,1]$, and let $(L,Y)_\de$ be a set of lines and shading.
Let $t\in[0,1]$.
For each $\ell\in L$, define 
\[
\ga_{Y,t}(\ell):=\sup_{\substack{r\in[\de,1]\\ x\in\ell}}\left(\frac{\de}{r}\right)^t(\de^{-2}|B_r(x)\cap Y(\ell)|),
\]
and define $\ga_{Y,t}:=\sup_{\ell\in L}\ga_{Y,t}(\ell)$.
\end{definition}

\begin{definition}
A set of lines in $\ZR^2$ is a Katz--Tao $(\de,t)$-set if its dual point set in $\ZR^2$ is a Katz--Tao $(\de,t)$-set.
\end{definition}

\begin{theorem}
\label{two-ends-furstenberg-general-intro}
Let $(L,Y)_\de$ be a set of $\de$-separated lines in $\ZR^2$ with a $(\e_1, \e_2)$-two-ends shading  such that $|Y(\ell)|\geq\la\de$ for all $\ell\in L$.
Let $t\in(0,2)$, and $t^\ast=\min\{t, 2-t\}$. 
Suppose the set of lines $L$ is a Katz--Tao $(\de, t)$-set, then
\[
    \Big|\bigcup_{\ell\in L}Y(\ell)\Big|\gtrapprox_{\e_2} \de^{t\e_1/2} \la^{1/2}\de^{(t-1)/2}\ga_{Y,t^\ast}^{-1/2}\sum_{\ell\in L}|Y(\ell)|.
\]
\end{theorem}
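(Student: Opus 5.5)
The plan is to derive the statement from the main two-ends Furstenberg theorem of \cite{Wang-Wu26} rather than prove it from scratch. The work is in matching hypotheses: that paper states its estimate for shadings satisfying a one-dimensional Frostman-type non-concentration condition along each line, with a fixed density. Here we have only a lower bound $|Y(\ell)|\geq \la\de$ and the quantity $\ga_{Y,t^\ast}$ from Definition \ref{gamma-Y-def}. So I would first normalize the data by a sequence of pigeonholing and rescaling steps, then invoke \cite{Wang-Wu26}, and finally undo the normalizations. Each normalization should cost at most $\lessapprox$ factors, or the explicit factor $\de^{t\e_1/2}$.

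\emph{Step 1: uniformize the shading.} By dyadic pigeonholing over the values $|Y(\ell)|$, pass to a subfamily $L'\subset L$ on which $|Y(\ell)|\sim \la'\de$ for a single $\la'\geq\la$, with $\sum_{L'}|Y(\ell)|\gtrapprox \sum_{L}|Y(\ell)|$. Since $L'\subset L$, the set $L'$ is still Katz--Tao $(\de,t)$, and both $\ga_{Y,t^\ast}$ and the union only decrease. Next, cover $N_\de(\ell)$ by $\de\times\de^{\e_1}$ segments $J$ and pigeonhole the number of $\de$-balls of $Y(\ell)$ inside each occupied $J$, again losing only $\lessapprox$. The two-ends hypothesis then gives $\gtrsim \de^{-\e_2}$ occupied segments, spread out along $\ell$.

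\emph{Step 2: convert $\ga$ into the non-concentration hypothesis of \cite{Wang-Wu26}.} By definition, $\de^{-2}|B_r(x)\cap Y(\ell)|\le \ga_{Y,t^\ast}(r/\de)^{t^\ast}$ for every $x\in\ell$ and $r\in[\de,1]$. This is exactly a Katz--Tao $(\de,t^\ast)$ condition on the $\de$-balls of $Y(\ell)$, with constant $\ga_{Y,t^\ast}$. To feed it into a theorem normalized with constant $O(1)$, I would split each $Y(\ell)$ randomly, or by a greedy selection, into about $\ga_{Y,t^\ast}$ sub-shadings $Y_j(\ell)$, each Katz--Tao with constant $O(1)$ and each of density about $\la/\ga$. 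I would choose a $j$ that carries a $\ga^{-1}$ fraction of the total mass. Applying \cite{Wang-Wu26} to $(L',Y_j)$ gives a bound with $\la^{1/2}$ replaced by $(\la/\ga)^{1/2}$ and mass $\ga^{-1}\sum|Y|$. That would lose $\ga^{-3/2}$, which is worse than the $\ga^{-1/2}$ claimed. So the splitting must instead be done at the level of the incidence count, as follows.

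\emph{Step 3: direct $L^2$ incidence bound (the route I would actually pursue).} By Cauchy--Schwarz,
\[
\Big|\bigcup_{\ell\in L'} Y(\ell)\Big|\ \ge\ \frac{\big(\sum_{\ell}|Y(\ell)|\big)^2}{\sum_{\ell,\ell'}|Y(\ell)\cap Y(\ell')|}.
\]
So it suffices to show that for each $\ell$,
\[
\sum_{\ell'}|Y(\ell)\cap Y(\ell')|\lessapprox \de^{-t\e_1/2}\la^{1/2}\de^{(1-t)/2}\ga^{1/2}\,|Y(\ell)|.
\]
Decompose the $\ell'$ dyadically by the angle $\theta\in[\de,1]$ they make with $\ell$. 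For a given $\theta$, two bounds are available for each pair. First, $|Y(\ell)\cap Y(\ell')|$ is at most the area of the $\de\times\de/\theta$ rhombus, which in turn is at most $\ga\,\de^2(\theta^{-1})^{t^\ast}$ by the $\ga$ condition. Second, the number of $\ell'$ at angle about $\theta$ that meet a fixed ball on $\ell$ is $\lesssim (\theta/\de)^{t}$ by the Katz--Tao property of $L$. Summing over the balls of $Y(\ell)$ gives one bound, and counting via the $\la'$-density of $Y(\ell')$ gives a second. Taking the geometric mean of the two resulting estimates is what produces the square roots $\la^{1/2}\ga^{1/2}$ and the factor $\de^{(t-1)/2}$.

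The two-ends condition enters to rule out the bad configuration in which all of $Y(\ell)$ sits near a single point of $\ell$. There the small-angle pairs would dominate; with two ends, the pairs at angle $\theta\lesssim\de^{\e_1}$ cost at most the stated factor $\de^{-t\e_1/2}$.

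\emph{Main obstacle.} The main obstacle is the intermediate-angle regime $\de^{\e_1}\lesssim\theta\ll1$. There the naive $L^2$ count is not sharp, and the genuinely new input of \cite{Wang-Wu26}, which uses a multiscale induction on scales, is needed. My plan for this regime is not to reprove it. I would localize to $\theta$-sectors, rescale each sector parabolically to a unit configuration at scale $\de/\theta$, and check that the rescaled family is Katz--Tao $(\de/\theta,t)$ with a two-ends shading. I would then apply \cite{Wang-Wu26} there, tracking how $\la$ and $\ga$ transform under the rescaling, and sum over dyadic $\theta$, which costs only $\lessapprox$.
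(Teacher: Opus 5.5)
\textbf{There is a genuine gap.} The paper does not prove this statement. It recalls it from \cite{Wang-Wu26} already in this form, with $\ga_{Y,t^\ast}$ and the loss $\de^{t\e_1/2}$; the only recasting happens afterwards, in Lemma \ref{two-ends-furstenberg-estimate}. So the bridge you build toward a hypothetical Frostman-normalized version is unnecessary, and the bridge you actually commit to, Step 3, does not work.

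First a slip: inverting the target constant, the sufficient per-line bound carries $\la^{-1/2}$, not $\la^{1/2}$. More seriously, even the corrected bound $\sum_{\ell'}|Y(\ell)\cap Y(\ell')|\lessapprox\de^{-t\e_1/2}\la^{-1/2}\de^{(1-t)/2}\ga^{1/2}|Y(\ell)|$ is false under the hypotheses. It fails at angles $\theta\sim1$, not only in the intermediate range. Take $t=1$, so $\ga_{Y,1}\sim1$. Let $L$ be $\sim\de^{-1}$ lines through a point $p$ with $\de$-separated directions, which is a Katz--Tao $(\de,1)$-set. Let each $Y(\ell)$, of length $\sim\la$, put a $\de^{\e_2}$-fraction of its mass on a segment of $\ell$ containing $p$ and spread the rest evenly along $\ell$; this shading is $(\e_1,\e_2)$-two-ends. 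The $\sim\de^{-1}$ lines at angle $\sim1$ to $\ell$ all meet $Y(\ell)$ in the $\de$-ball at $p$, so the left side is $\gtrsim\de$. The right side is about $\de^{-\e_1/2}\la^{1/2}\de$, which is far smaller when, say, $\la=\de^{1/2}$ and $\e_1$ is small. The theorem itself holds in this example, since away from $p$ the multiplicity is $\lesssim\de^{-\e_1}$. Cauchy--Schwarz, however, returns only $|\bigcup_\ell Y(\ell)|\gtrsim\la\sum_\ell|Y(\ell)|$ up to logarithms. The $\la^{1/2}$ gain is not an $L^2$ phenomenon, and no geometric mean of pairwise-intersection bounds will produce it.

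Your patch for the intermediate angles does not rescue this, because it does not connect to the reduction. Applying \cite{Wang-Wu26} on a rescaled sector gives a lower bound for the measure of a union. Such a bound gives no upper bound on the incidence energy $\sum_{\ell,\ell'}|Y(\ell)\cap Y(\ell')|$, which is what your Cauchy--Schwarz reduction needs; the bush above has large union and large energy at the same time. As written, the proposal proves nothing beyond what it cites. The correct move is simply to quote the theorem from \cite{Wang-Wu26}, as the paper does.
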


We next recast this result in the form needed for our argument.

\begin{lemma}\label{two-ends-furstenberg-estimate}
Let $(L,Y)_\de$ be a collection of
$\de$-separated lines in $\ZR^2$ equipped with an
$(\e_1,\e_2)$-two-ends shading. Let $1 \leq M\leq \de^{-1+\e_1}$. Assume that
\[
|Y(\ell)|\geq \la\de
\]
for every $\ell\in L$. Let $t\in(0,2)$ and suppose that $L$ is a
Katz--Tao $(\de,t)$-set. Furthermore, assume that for every
$\ell\in L$ and $x \in \ell$,
\[
\de^{-2}|B_{\de^{\e_1}}(x)\cap Y(\ell)|\lesssim M.
\]
Then,
\[
\left|\bigcup_{\ell\in L}Y(\ell)\right|
\gtrapprox_{\e_2}
\de^{O(\e_1)}
\la^{1/2}
\de^{(t-1)/2}
M^{-\frac12\max\{t-1,1-t\}}
\sum_{\ell\in L}|Y(\ell)|.
\]
\end{lemma}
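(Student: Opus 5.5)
The lemma is a direct consequence of Theorem \ref{two-ends-furstenberg-general-intro}. Once $\ga_{Y,t^\ast}$ is bounded in terms of $M$, the rest is bookkeeping of exponents. I will apply that theorem with the same $t$, so $t^\ast=\min\{t,2-t\}$, and show
\[
\ga_{Y,t^\ast}\lesssim \de^{-O(\e_1)} M^{\max\{t-1,1-t\}}\,\de^{-(1-t^\ast)\cdot 0}.
\]
More precisely, I aim for $\ga_{Y,t^\ast}\lessapprox \de^{-O(\e_1)}M^{1-t^\ast}$. Since $1-t^\ast=\max\{1-t,t-1\}$, inserting this into the conclusion of Theorem \ref{two-ends-furstenberg-general-intro} gives the claimed inequality. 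The factor $\de^{t\e_1/2}$ is absorbed into $\de^{O(\e_1)}$.

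To bound $\ga_{Y,t^\ast}(\ell)$, fix $\ell$, a point $x\in\ell$ and a radius $r\in[\de,1]$. The quantity $\de^{-2}|B_r(x)\cap Y(\ell)|$ counts, up to constants, the number of $\de$-balls of $Y(\ell)$ inside $B_r(x)$. Two trivial bounds are available. Since $Y(\ell)\subset N_\de(\ell)$, we have $\de^{-2}|B_r(x)\cap Y(\ell)|\lesssim r/\de$. Covering $B_r(x)\cap N_\de(\ell)$ by $\lesssim \max\{1,r\de^{-\e_1}\}$ balls of radius $\de^{\e_1}$ centred on $\ell$ and using the hypothesis gives $\de^{-2}|B_r(x)\cap Y(\ell)|\lesssim M\max\{1,r\de^{-\e_1}\}$. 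Hence
\[
\Big(\frac{\de}{r}\Big)^{t^\ast}\de^{-2}|B_r(x)\cap Y(\ell)|\lesssim \de^{-\e_1}\Big(\frac{\de}{r}\Big)^{t^\ast}\min\Big\{\frac{r}{\de},\,M\max\{1,r\}\Big\}.
\]
Set $s=r/\de\ge1$, noting $t^\ast\in(0,1]$.

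For $r\le \de^{\e_1}$ (essentially $r\lesssim 1$ after losing $\de^{-\e_1}$), the bound is $s^{-t^\ast}\min\{s,M\}$. This is maximised at $s=M$, where it equals $M^{1-t^\ast}$. For $r\ge\de^{\e_1}$, the bound is $s^{-t^\ast}M r\de^{-\e_1}=\de^{-\e_1}M\de^{t^\ast} r^{1-t^\ast}\le \de^{-\e_1}M\de^{t^\ast}$. Since $M\le\de^{-1+\e_1}\le\de^{-1}$, we get $M\de^{t^\ast}\le M^{1-t^\ast}$, which is exactly where the hypothesis $M\le \de^{-1+\e_1}$ enters. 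Combining both ranges gives $\ga_{Y,t^\ast}\lesssim \de^{-O(\e_1)}M^{1-t^\ast}$.

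The main obstacle is this case analysis, and in particular checking that the large-scale regime is dominated by $M^{1-t^\ast}$ using only the crude bound at scale $\de^{\e_1}$. Any loss there must be only $\de^{-O(\e_1)}$ and never a power of $M$, since the two-ends hypothesis gives no control at scales above $\de^{\e_1}$ other than the covering argument. The rest is substituting into Theorem \ref{two-ends-furstenberg-general-intro} and noting that $\la$, the separation and the Katz--Tao hypotheses transfer unchanged.
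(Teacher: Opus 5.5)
Your proposal is correct and matches the paper's proof. Like the paper, you reduce via Theorem \ref{two-ends-furstenberg-general-intro} to bounding $\ga_{Y,t^\ast}$ and split the radii at $\de^{\e_1}$. Below that scale you use $\min\{r/\de,M\}$, and above it the covering bound $\de^{-\e_1}Mr$, with $M\le\de^{-1+\e_1}$ giving $M\de^{t^\ast}\lesssim M^{1-t^\ast}$. Your stray factor $\de^{-(1-t^\ast)\cdot 0}$ equals $1$ and should be deleted.
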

\begin{proof}
Let
\[
t^\ast:=\min\{t,2-t\}.
\]
By Theorem \ref{two-ends-furstenberg-general-intro}, it suffices to prove that
\begin{equation}\label{ga0}
    \ga_{Y,t^\ast}
    \lesssim
    (\de^{-\e_1}M)^{\max\{t-1,1-t\}}.
\end{equation}

Fix $\ell\in L$. For $r\in[\de^{\e_1},1]$, we have
\[
\sup_{x\in\ell}
\left(\frac{\de}{r}\right)^{t^\ast}
\de^{-2}|B_r(x)\cap Y(\ell)|
\lesssim
\de^{t^\ast}\de^{-\e_1}M.
\]
Since
\[
(\de^{1-\e_1} M)^{t^\ast}\lesssim 1,
\]
it follows that
\begin{equation}\label{ga1}
\sup_{\substack{r\in[\de^{\e_1},1]\\ x\in\ell}}
\left(\frac{\de}{r}\right)^{t^\ast}
\de^{-2}|B_r(x)\cap Y(\ell)|
\lesssim
(\de^{-\e_1}M)^{\max\{t-1,1-t\}}.
\end{equation}

On the other hand, 
\[
\sup_{\substack{r\in[\de,\de^{\e_1}]\\ x\in\ell}}
\left(\frac{\de}{r}\right)^{t^\ast}
\de^{-2}|B_r(x)\cap Y(\ell)|
\lesssim
M^{\max\{t-1,1-t\}}.
\]
Combining this estimate with \eqref{ga1} yields \eqref{ga0}.
\end{proof}
By point-line dual, one has
\begin{corollary}\label{two-ends-furstenberg-estimate-dual}
Let $\cq$ be a collection of $\de$-balls contained in $B_1(0)$.
For each $Q\in\cq$, let $\cT(Q)$ be a family of
$\de\times 1$ rectangles intersecting $Q$.
Let
\[
0<\e_2<\e_1<1,
\qquad
M\geq 1.
\]
For an arc $\si\subset\ZS^1$ of length $\de^{\e_1}$, define
\[
\cT_\si(Q)
:=
\left\{
T\in\cT(Q):
\operatorname{dir}(T)\in\si
\right\}.
\]

Suppose that $t\in[0,2]$ and that $\bigcup\cq$ is a Katz--Tao
$(\de,t)$-set. Assume further that, for every $Q\in\cq$,
\[
\#\cT(Q)\geq M,
\]
and that, for every arc $\si\subset\ZS^1$ of length $\de^{\e_1}$,
\[
\#\cT_\si(Q)\sim M
\qquad\text{or}\qquad
\#\cT_\si(Q)=0,
\]
together with
\[
\#\cT_\si(Q)
\lesssim
\de^{\e_2}\#\cT(Q).
\]
Then
\[
\#\bigcup_{Q\in\cq}\cT(Q)
\gtrapprox_{\e_2}
\de^{O(\e_1)}\de^{t/2}\#\cq
\begin{cases}
M^{1+t/2},
& 0\leq t<1,\\[4pt]
M^{2-t/2},
& 1\leq t\leq2.
\end{cases}
\]
\end{corollary}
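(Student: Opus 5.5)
Apply Lemma \ref{two-ends-furstenberg-estimate} through point--line duality, and then convert the volume bound into a count of rectangles.

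\textbf{Duality.} Send each $\de$-ball $Q\in\cq$ with center $(a,b)$ to the line $\ell_Q=\{(u,v):v=au-b\}$. Each $\de\times1$ rectangle $T$ of slope $u$ through a point $(a,b)$ is sent to a $\de$-ball centered at $(u,\cdot)$ lying on $N_\de(\ell_Q)$. After a harmless rotation, which lets us assume all slopes are bounded, this turns:
\begin{itemize}
\item the collection $\cq$ into a family $L$ of $\de$-separated lines; since $\bigcup\cq$ is a Katz--Tao $(\de,t)$-set, $L$ is a Katz--Tao $(\de,t)$-set in the sense of the preceding definition;
\item $\cT(Q)$ into a shading $Y(\ell_Q)$, a union of $\#\cT(Q)$ finitely overlapping $\de$-balls, with $|Y(\ell_Q)|\sim\de^2\#\cT(Q)$.
\end{itemize}
The union $\bigcup_Q\cT(Q)$ corresponds to $\bigcup_\ell Y(\ell)$, up to the bounded multiplicity with which distinct rectangles give essentially distinct balls. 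Hence
\[
\Big|\bigcup_\ell Y(\ell)\Big|\sim\de^2\#\bigcup_Q\cT(Q).
\]

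\textbf{Hypotheses of the lemma.}
\begin{itemize}
\item A $\de^{\e_1}$-arc of directions $\si$ corresponds to a $\de\times\de^{\e_1}$ segment $J$ of $N_\de(\ell_Q)$. Then $\#\cT_\si(Q)\lesssim\de^{\e_2}\#\cT(Q)$ is exactly the $(\e_1,\e_2)$-two-ends condition.
\item The condition $\#\cT_\si(Q)\lesssim M$ gives $\de^{-2}|B_{\de^{\e_1}}(x)\cap Y(\ell)|\lesssim M$.
\item Set $\la:=\de\,\#\cT(Q)$, so that $|Y(\ell)|\geq\la\de$.
\end{itemize}
One point needs care: $\la$ depends on $Q$. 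First pigeonhole $\cq$ into dyadic classes in which $\#\cT(Q)\sim N$, with $N\geq M$, at a cost of $\log$ factors. Keep the class containing a $\gtrapprox1$ fraction of the balls, and note that the Katz--Tao property passes to the subset. We also need the restriction $M\le\de^{-1+\e_1}$. This is automatic, because $\#\cT_\si(Q)\lesssim\de^{-\e_1}\cdot$ (number of $\de$-separated directions in $\si$), so $M\lesssim\de^{-1+\e_1}$ up to constants. If the directions are not $\de$-separated we can either pigeonhole again or simply reduce to the separated case.

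\textbf{Numerology.} The lemma gives
\[
\de^2\#\bigcup\cT\gtrapprox\de^{O(\e_1)}(\de N)^{1/2}\de^{(t-1)/2}M^{-\frac12|t-1|}\cdot\#\cq\,\de^2N.
\]
Dividing by $\de^2$ leaves $\de^{t/2}\#\cq\,N^{3/2}M^{-|t-1|/2}$. Since $N\ge M$, we may replace $N^{3/2}$ by $M^{3/2}$, which yields:
\begin{itemize}
\item for $t<1$: $M^{3/2-(1-t)/2}=M^{1+t/2}$;
\item for $t\ge1$: $M^{3/2-(t-1)/2}=M^{2-t/2}$.
\end{itemize}
These are the claimed bounds.

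\textbf{Main obstacle.} The delicate step is checking that the duality genuinely preserves the Katz--Tao $(\de,t)$ property and the two-ends structure up to $\de^{O(\e_1)}$ losses, given the bounded-slope normalization. The dyadic pigeonholing in $\#\cT(Q)$ is routine by comparison.
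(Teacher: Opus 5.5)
Your proposal is correct and is the argument the paper intends; the paper justifies the corollary only with the remark ``by point--line duality.'' That argument is: dualize $\cq$ to lines and $\cT(Q)$ to a shading, check the two-ends and $\de^{\e_1}$-ball hypotheses of Lemma~\ref{two-ends-furstenberg-estimate}, use $|\bigcup_\ell Y(\ell)|\lesssim\de^2\#\bigcup_Q\cT(Q)$, and read off $\de^{t/2}M^{3/2-|t-1|/2}$ exactly as you computed.

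Three minor points. First, $M\lesssim\de^{-1+\e_1}$ holds simply because a $\de$-ball meets only $\lesssim\de^{\e_1}/\de$ essentially distinct $\de\times1$ rectangles with directions in a $\de^{\e_1}$-arc; your extra factor $\de^{-\e_1}$ is a slip. Second, a single rotation cannot make all slopes bounded, but splitting directions into $O(1)$ sectors and pigeonholing does. Third, the dyadic pigeonholing in $\#\cT(Q)$ is unnecessary, since $\la\sim\de M$ together with $\sum_Q\#\cT(Q)\ge M\#\cq$ already gives the bound.
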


\medskip

\subsection{Narrow--Broad Argument}
In this subsection, we reduce Theorem \ref{main-result-rewrote} to an $L^2$-broad estimate. We first recall the definition of the broad norm, which will be used throughout the decoupling argument. Similar formulations may be found
in \cite{BG}.

\begin{definition} \label{definition-broad}
Let $K$ be as in Subsection \ref{parameter}. Let
$\Si=\{\si\}$ be a finitely overlapping collection of $K^{-1}$-intervals
covering $[-1,1]$. For each $\si\in\Si$, set
\[
    f_\si:=f\Id_\si.
\]
For $x\in\ZR^2$ and integer $A\geq 1$, define the $A$-broad part of $E_\Phi f$ by
\[
    \BR_A E_\Phi f(x)
    :=
    \max_{\substack{\Si'\subset\Si\\ \#\Si'=A}}
    \min_{\si\in\Si'}
    |E_\Phi f_\si(x)|.
\]
\end{definition}
The broad norm satisfies the following standard triangle inequality;
see \cite[Section 4]{G}.

\begin{lemma}\label{triangle-inequality}
Suppose that $A=A_1+A_2$ and $f=f_1+f_2$. Let $q \geq 2$ and $U \subset \mathbb{R}^2$, then
\[
    \|\BR_A E_\Phi f\|_{L^q(U)}
    \leq
    \|\BR_{A_1} E_\Phi f_1\|_{L^q(U)}
    +
    \|\BR_{A_2} E_\Phi f_2\|_{L^q(U)}.
\]
\end{lemma}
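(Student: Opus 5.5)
The plan is to prove the inequality pointwise and then take $L^q$ norms, relying on the triangle inequality in $L^q(U)$. First fix $x\in U$ and compare $\BR_A E_\Phi f(x)$ with $\BR_{A_1}E_\Phi f_1(x)+\BR_{A_2}E_\Phi f_2(x)$. Since $f=f_1+f_2$ and the map $g\mapsto g\Id_\si$ is linear, we have $f_\si=(f_1)_\si+(f_2)_\si$, and hence
\[
|E_\Phi f_\si(x)|\le |E_\Phi (f_1)_\si(x)|+|E_\Phi (f_2)_\si(x)|
\]
for every $\si\in\Si$.

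Next, let $\Si'\subset\Si$ with $\#\Si'=A$ be a collection attaining the maximum in the definition of $\BR_A E_\Phi f(x)$. Order the $A$ caps of $\Si'$ by decreasing $|E_\Phi(f_1)_\si(x)|$. Let $\si_1$ be the cap in position $A_1$ of this ordering. Then $\Si'$ contains $A_1$ caps (positions $1,\dots,A_1$) on which $|E_\Phi(f_1)_\si(x)|\ge |E_\Phi(f_1)_{\si_1}(x)|$. So $|E_\Phi(f_1)_{\si_1}(x)|$ is a minimum over some $A_1$-subset, which gives $|E_\Phi(f_1)_{\si_1}(x)|\le \BR_{A_1}E_\Phi f_1(x)$. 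The same bound holds for every $\si$ in positions $A_1,\dots,A$, since the values there are even smaller.

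Among these last $A-A_1+1=A_2+1$ caps, choose $\si_*$ minimizing $|E_\Phi(f_2)_\si(x)|$. Discarding $\si_*$ leaves $A_2$ caps on which $|E_\Phi(f_2)_\si(x)|\ge|E_\Phi(f_2)_{\si_*}(x)|$, so this value is bounded by $\BR_{A_2}E_\Phi f_2(x)$. Since $\si_*\in\Si'$ and $\si_*$ lies in one of positions $A_1,\dots,A$, we obtain
\[
\BR_A E_\Phi f(x)\le \min_{\si\in\Si'}|E_\Phi f_\si(x)|\le |E_\Phi f_{\si_*}(x)|\le \BR_{A_1}E_\Phi f_1(x)+\BR_{A_2}E_\Phi f_2(x).
\]
This case is consistent with the $A_1$ and $A_2$ caps being allowed to overlap, since $(A_1-1)+A_2<A$.

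Finally, we take the $L^q(U)$ norm of both sides and apply Minkowski's inequality. Any $q\ge1$ works; $q\ge2$ is simply the range used later. The only delicate point in the whole argument is the counting step, namely checking that the pigeonholing leaves enough caps for both broad parts; the rest is routine.
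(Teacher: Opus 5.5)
Your proof is correct. The pointwise inequality $\BR_A E_\Phi f(x)\le \BR_{A_1}E_\Phi f_1(x)+\BR_{A_2}E_\Phi f_2(x)$ followed by Minkowski is exactly what is needed, and you are right that $q\ge 1$ suffices.

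The paper gives no proof of its own; it cites \cite[Section 4]{G}, where the argument is organized dually. Since $\BR_A E_\Phi f(x)$ is the $A$-th largest of the numbers $|E_\Phi f_\si(x)|$, it equals $\min_{\#S=A-1}\max_{\si\notin S}|E_\Phi f_\si(x)|$. One takes an optimal exceptional set $S_1$ of size $A_1-1$ for $f_1$ and an optimal exceptional set $S_2$ of size $A_2-1$ for $f_2$. By the triangle inequality, every $\si\notin S_1\cup S_2$ then satisfies $|E_\Phi f_\si(x)|\le \BR_{A_1}E_\Phi f_1(x)+\BR_{A_2}E_\Phi f_2(x)$. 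The conclusion follows because $\#(S_1\cup S_2)\le A-2$.

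Your version instead fixes an optimal $A$-subset for $f$ and ranks the caps inside it to find a single cap that is simultaneously low for $f_1$ and for $f_2$. The two arguments have the same content; both give the bound even with $A$ replaced by $A_1+A_2-1$, which is the slack your remark $(A_1-1)+A_2<A$ detects. The difference is in how they generalize. The exceptional-set formulation carries over verbatim to Guth's $k$-broad norms, where one excludes caps near a few subspaces rather than ranking individual caps. Your ranking argument is tied to the order-statistic form of the definition used in this paper, though it is perfectly adequate here.
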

We now state the main result of this paper.

\begin{theorem}\label{l2-broad-estimate}
Let $0\leq t\leq 2$, and let $X\subset B_R(0)$ be a union of unit balls. Suppose $A\sim R^{\e^{100}}$. 
Assume that the $R^{-1}$-dilate of $X$ is a Katz--Tao
$(R^{-1},t)$-set. Then, for every
$f\in\mathcal{S}(\mathbb{R})$ with
$\operatorname{supp} f\subset[-1,1]$ and every $\Phi\in C^\infty([-1,1];\mathbb{R})$ satisfying
\eqref{phi-condition}, we have
\[
\|\BR_A E_\Phi f\|_{L^2(X)}
\lesssim_\e
R^\e |X|^{\frac{2t}{3t+6}}\|f\|_2,
\qquad \text{if }0\leq t<1,
\]
and
\[
\|\BR_A E_\Phi f\|_{L^2(X)}
\lesssim_\e
R^\e |X|^{\frac{4t-2}{9t}}\|f\|_2,
\qquad \text{if } 1\leq t\leq2.
\]
\end{theorem}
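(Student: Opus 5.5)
The plan is to follow the scheme of Wu \cite{Wu} for $t=1$: refined decoupling (Theorem \ref{refined-decoupling-thm}) at the top scale, combined with the dual two-ends Furstenberg bound (Corollary \ref{two-ends-furstenberg-estimate-dual}) to control the number of wave packets through the incidence parameter $M$. The case split $t<1$ versus $t\geq 1$ will come only from the two branches $M^{1+t/2}$ and $M^{2-t/2}$ of that corollary.

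\textbf{Step 1: reductions and pigeonholing.} Decompose $f=\sum_{T\in\ZT}f_T$ into wave packets at scale $r=R$ and write $\de:=R^{-1/2}$. After rescaling by $R^{-1}$, the tubes become $\de\times1$ rectangles and the $R^{1/2}$-balls become $\de$-balls. By dyadic pigeonholing (losing only $R^{\e/10}$ factors) I will arrange the following:
\begin{enumerate}
\item all $\|f_T\|_2$ are comparable, with $\ZT$ replaced by a subfamily;
\item $X$ is replaced by a subset $X'$ carrying a fixed proportion of $\|\BR_AE_\Phi f\|_{L^2(X)}^2$;
\item $X'$ is a union of $R^{1/2}$-balls $Q$, each meeting $\sim M$ tubes;
\item each $Q$ contains $\sim \beta$ unit balls of $X$, with $\beta\lesssim R^{t/2}$ by the Katz--Tao hypothesis.
\end{enumerate}
The collection $\cq$ of these balls is, after rescaling, a Katz--Tao $(\de,t)$-set up to the multiplicity $\beta$. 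I will further pigeonhole the count in each arc $\si$ of length $\de^{\e_1}$, so that $\#\cT_\si(Q)\sim M'$ or $0$, and refine $\ZT$ accordingly.

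\textbf{Step 2: broadness gives the hypotheses of Corollary \ref{two-ends-furstenberg-estimate-dual}.} On $X'$, $\BR_AE_\Phi f(x)\neq 0$ forces at least $A\sim R^{\e^{100}}$ distinct $K^{-1}$-caps $\si$ with non-negligible $E_\Phi f_\si(x)$. By Lemma \ref{spatial-concentration}, the wave packets through $Q$ are therefore spread over many directions. Choosing $\de^{\e_1}\le K^{-1}$ and $\e_2\ll\e^{100}$, this should give $\#\cT_\si(Q)\lesssim \de^{\e_2}\#\cT(Q)$, which is the arc two-ends condition. Applying the corollary then yields
\[
\#\ZT\gtrapprox \de^{O(\e_1)}\de^{t/2}\,\#\cq\, M^{1+t/2}\quad(t<1),\qquad \#\ZT\gtrapprox \de^{O(\e_1)}\de^{t/2}\,\#\cq\, M^{2-t/2}\quad(t\ge1).
\]
Combined with $\|f\|_2^2\sim \#\ZT\,\|f_T\|_2^2$ from Lemma \ref{l2-orthogonalty}, this is a lower bound on $\|f\|_2$ in terms of $M$ and $|X'|\sim \beta R\,\#\cq$. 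I expect this step to be the main obstacle. It requires making the two-ends/arc condition genuinely follow from the broad norm, which needs care with the mismatch between the scales $K^{-1}$, $\de^{\e_1}$ and $R^{-1/2}$, and possibly an induction on scales or the triangle inequality of Lemma \ref{triangle-inequality} to discard narrow contributions. It also requires handling the Katz--Tao condition with multiplicity $\beta$, for which I would again pigeonhole so that the $\de$-balls themselves form a Katz--Tao set.

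\textbf{Step 3: two upper bounds and optimization.} First, H\"older gives $\|\BR_AE_\Phi f\|_{L^2(X')}\le |X'|^{1/3}\|E_\Phi f\|_{L^6(X')}$. Refined decoupling then bounds this by
\[
|X'|^{1/3}M^{1/3}\Big(\sum_T\|E_\Phi f_T\|_{L^6}^6\Big)^{1/6}.
\]
Using $\|E_\Phi f_T\|_{L^6}^6\sim R^{3/2}\cdot R^{-3/2}\,\|f_T\|_2^6\cdot|T|$ (each packet having amplitude $\sim R^{-1/4}\|f_T\|_2$ on $T$), this becomes a quantity in $M$, $\#\ZT$ and $\|f_T\|_2$. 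Second, there is the elementary bound
\[
\|E_\Phi f\|_{L^2(X')}^2\lesssim \#\cq\,\beta\, M\,R^{-1/2}\|f_T\|_2^2 .
\]
I will insert the Furstenberg lower bound for $\#\ZT$ from Step 2 into the decoupling estimate, take the minimum of the two bounds, and optimize over $M\in[1,\de^{-1+\e_1}]$ and $\beta$. Bookkeeping with $|X'|\le|X|$ should produce the exponents $\frac{2t}{3t+6}$ for $t<1$ and $\frac{4t-2}{9t}$ for $1\le t\le2$. All remaining losses, namely $\de^{O(\e_1)}$, the pigeonholing factors and $K^{O(1)}$, are absorbed into $R^\e$.
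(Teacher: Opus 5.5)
\textbf{There is a genuine gap.} It has two parts: an incorrect elementary bound, and a missing estimate for small $|X|$.

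\textbf{The elementary bound is false.} Your bound $\|E_\Phi f\|_{L^2(X')}^2\lesssim \#\cq\,\beta M R^{-1/2}\|f_T\|_2^2$ assumes square-root cancellation among the $M$ wave packets at every point of the sparse set $X\cap Q$. Local orthogonality is only available on all of $Q$, not on $X\cap Q$. Since $\#\mathbb T\ge M$, we have $M\|f_T\|_2^2\lesssim\|f\|_2^2$, so your bound would give $\|E_\Phi f\|_{L^2(X)}^2\lesssim|X|R^{-1/2}\|f\|_2^2$ for every $X$. This already fails for $f$ a smooth bump on $[-1,1]$ and $X$ the unit ball at the origin, where the left side is $\sim1$; that $f$ is also broad. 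The only available bound is pointwise: $|E_\Phi f_\si(x)|\lesssim M_\si R^{-1/4}\|f_T\|_2$, which is quadratic in the per-cap multiplicity $M_\si$. This is the paper's $KM^2sr^{-1/2}(\#\cq)(\#\mathcal T')^{-1}\|f\|_2^2$. (Separately, $\|E_\Phi f_T\|_{L^6}^6\sim\|f_T\|_2^6$, without the extra factor $|T|$.)

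\textbf{More seriously, refined decoupling plus the corrected $L^2$ bound cannot prove the theorem.} Insert the Furstenberg bound with the Katz--Tao refinement $\#\cq'\gtrapprox \beta R^{-t/2}\#\cq$ and $|X'|\sim\beta\#\cq$. Then $|X|$ cancels in both estimates, leaving, up to $K^{O(1)}$:
\begin{itemize}
\item for $t<1$: $\min\{M^{1-t/2}R^{3t/4-1/2},\,M^{-t/3}R^{t/2}\}\le R^{2t/(6-t)}$;
\item for $t\ge1$: $\min\{M^{t/2}R^{3t/4-1/2},\,M^{(t-2)/3}R^{t/2}\}\le R^{(4t-2)/(t+4)}$.
\end{itemize}
These are below the squared targets $|X|^{4t/(3t+6)}$, resp.\ $|X|^{(8t-4)/(9t)}$, only when $|X|\gtrsim R^{(3t+6)/(2(6-t))}$, resp.\ $R^{9t/(2(t+4))}$. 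For smaller $|X|$, no optimization over $M$ and $\beta$ helps. For example, take $X$ to be $R^{t/2}$ unit balls forming a $t$-dimensional configuration inside a single $R^{1/2}$-ball. Your bounds give at best $R^{t/3}$, while the targets are $R^{2t^2/(3t+6)}$, resp.\ $R^{(4t-2)/9}$, both smaller whenever $t<2$.

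\textbf{What the paper does instead.} It uses a \emph{spatial} two-ends reduction: each tube is shaded by $r^{1-\e^2}$-segments $Y(T)$.
\begin{itemize}
\item If $\#Y(T)\le r^{\e^4}$, it inducts on scales. This is why the theorem is actually proved in the version with an auxiliary scale $r$ and a factor $r^\e$.
\item If $\#Y(T)> r^{\e^4}$, it uses Wu's hairbrush estimate $\|\BR_AE_\Phi f\|_{L^2(X)}^2\lesssim r^{O(\e^2)}|X|r^{-1/2}\|f\|_2^2$. Interpolating this with the other two estimates gives the stated exponents, and the crossover occurs exactly at the thresholds above.
\end{itemize}
The directional arc condition you extract from broadness in Step 2 is a different property. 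It does feed the Furstenberg corollary, and it follows by routine pigeonholing with $\de^{\e_1}=K^{-1}$, so it is not the real obstacle. But it yields neither the hairbrush bound nor the induction on scales.
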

By pigeonholing, we have
\begin{corollary}\label{l2-lq-broad-estimate}
Let $0\leq t\leq 2$ and $X\subset B_R(0)$.Suppose $A\sim R^{\e^{100}}$. 
Assume that the $R^{-1}$-dilate of $X$ is a Katz--Tao
$(R^{-1},t)$-set. Then, for every
$f\in\mathcal{S}(\mathbb{R})$ with
$\operatorname{supp} f\subset[-1,1]$ and every
$\Phi\in C^\infty([-1,1];\mathbb{R})$ satisfying
\eqref{phi-condition}, we have
\[
\|\BR_A E_\Phi f\|_{L^q(X)}
\lesssim_\e
R^\e \|f\|_2
\]
for 
\begin{equation} \label{small-t-q-range}
    q \geq \frac{6t+12}{6-t},
\qquad \text{if }0\leq t<1
\end{equation}
and
\begin{equation} \label{largw-t-q-range}
    q \geq \frac{18t}{t+4},
\qquad \text{if }1\leq t\leq2
\end{equation}
\end{corollary}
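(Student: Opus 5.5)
Corollary \ref{l2-lq-broad-estimate} should follow from Theorem \ref{l2-broad-estimate} by dyadically pigeonholing the level sets of $\BR_A E_\Phi f$ on $X$. Then we apply the $L^2$ broad estimate on the sub-level set and interpolate with a trivial $L^\infty$ bound.

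Normalize $\|f\|_2=1$. By locally constant heuristics (Bernstein at unit scale, since $X$ is a union of unit balls and the frequency support lies in a bounded set), $|\BR_A E_\Phi f|\lesssim \|f\|_1\lesssim 1$ pointwise. Moreover, the portion of $X$ where $|\BR_A E_\Phi f|\le R^{-100}$ contributes at most $R^{-100}|X|^{1/q}\lesssim R^{-90}$. So decompose the rest of $X$ into $O(\log R)$ dyadic level sets
\[
X_\alpha:=\{x\in X: |\BR_A E_\Phi f(x)|\sim \alpha\},\qquad R^{-100}\le \alpha\lesssim 1.
\]
We may take $X_\alpha$ to be a union of unit balls, after replacing the level set by the unit balls on which the broad function is roughly constant; this step uses the locally constant property, with the usual harmless loss. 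By pigeonholing, one $\alpha$ satisfies
\[
\|\BR_AE_\Phi f\|_{L^q(X)}\lessapprox \alpha |X_\alpha|^{1/q}.
\]
The key observation is that $X_\alpha\subset X$, so the $R^{-1}$-dilate of $X_\alpha$ is still a Katz--Tao $(R^{-1},t)$-set. Theorem \ref{l2-broad-estimate} therefore applies to $X_\alpha$.

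In the case $0\le t<1$, the theorem gives
\[
\alpha|X_\alpha|^{1/2}\lesssim \|\BR_AE_\Phi f\|_{L^2(X_\alpha)}\lessapprox |X_\alpha|^{\frac{2t}{3t+6}}.
\]
Hence $\alpha\lessapprox |X_\alpha|^{-\beta}$ with
\[
\beta=\frac12-\frac{2t}{3t+6}=\frac{6-t}{6t+12}.
\]
Then
\[
\alpha|X_\alpha|^{1/q}=\alpha^{1-q\beta}\,\bigl(\alpha|X_\alpha|^{\beta}\bigr)^{q\beta}\lessapprox \alpha^{1-q\beta}.
\]
This is $\lesssim 1$ exactly when $q\beta\ge1$, since $\alpha\lesssim 1$. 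The condition $q\beta\ge 1$ is $q\ge \frac{6t+12}{6-t}$, which is \eqref{small-t-q-range}.

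For $1\le t\le 2$, the same computation applies with
\[
\beta=\frac12-\frac{4t-2}{9t}=\frac{t+4}{18t},
\]
which gives $q\ge\frac{18t}{t+4}$, which is \eqref{largw-t-q-range}.

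The only mildly delicate step is justifying the pointwise bound $|\BR_AE_\Phi f|\lesssim1$ and the reduction of the level sets to unions of unit balls. For the pointwise bound, observe that each $|E_\Phi f_\si|\le\|f\|_1\le 2^{1/2}\|f\|_2$, so the broad part is bounded by the same quantity, and this is all we need. For the level sets, if one prefers to avoid the locally constant heuristic, one can apply Theorem \ref{l2-broad-estimate} with $X$ replaced by the union of unit balls in $X$ meeting $X_\alpha$. This union is again a subset of $X$, hence Katz--Tao, and its measure is comparable to $|X_\alpha|$ up to the unit-ball discretization, which is the form needed for the argument above. The $R^\e$ losses from the $\log R$ pigeonholing are absorbed by $\lessapprox$.
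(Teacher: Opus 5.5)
Your route is the one the paper intends; its proof is the single phrase ``By pigeonholing''. You dyadically pigeonhole the level sets of $\BR_AE_\Phi f$ on $X$, note that $X_\alpha\subset X$ is still Katz--Tao, apply Theorem \ref{l2-broad-estimate} on $X_\alpha$, and combine with the trivial bound $|\BR_AE_\Phi f|\le\|f\|_1\lesssim\|f\|_2$. Your values of $\beta$ and the thresholds $q\ge\frac{6t+12}{6-t}$ and $q\ge\frac{18t}{t+4}$ are correct.

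The interpolation display, however, is wrong as written, and two errors cancel. First, $\alpha^{1-q\beta}\bigl(\alpha|X_\alpha|^{\beta}\bigr)^{q\beta}=\alpha|X_\alpha|^{q\beta^2}$, which is not $\alpha|X_\alpha|^{1/q}$ unless $q\beta=1$. Second, for $\alpha\lesssim1$ the quantity $\alpha^{1-q\beta}$ is bounded when $q\beta\le 1$, not when $q\beta\ge1$. The correct identity is
\[
\alpha|X_\alpha|^{1/q}=\alpha^{1-\frac{1}{q\beta}}\bigl(\alpha|X_\alpha|^{\beta}\bigr)^{\frac{1}{q\beta}}\lesssim_\e R^{\frac{\e}{q\beta}}\,\alpha^{1-\frac{1}{q\beta}}.
\]
Its exponent is nonnegative exactly when $q\beta\ge1$, and in that case $R^{\e/(q\beta)}\le R^{\e}$. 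So your conclusion stands.

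Your fallback for the level sets does not actually avoid local constancy. If $X_\alpha$ occupies only a tiny fraction of each unit ball it meets, the union $\tilde X_\alpha$ of those balls can have $|\tilde X_\alpha|\gg|X_\alpha|$. The theorem on $\tilde X_\alpha$ then only gives $\alpha|X_\alpha|^{1/2}\lesssim R^{\e}|\tilde X_\alpha|^{\frac{2t}{3t+6}}$, which is too weak. Comparability of $|\tilde X_\alpha|$ and $|X_\alpha|$ is precisely the locally constant property. You need it, or a locally constant majorant of each $|E_\Phi f_\si|$, in either version, and so does the paper's one-line argument.
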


The following lemma allows us to deduce Theorem \ref{main-result-rewrote} from the preceding estimate. 

\begin{lemma}[{\cite[Lemma 1.6]{Wang-Wu24}}]
\label{katz-tao-set-lem}
Let $\de\in(0,1)$ and $0\leq t\leq2$. Suppose that
$X\subset B_1(0)$ is a union of $\de$-balls, and define
$\ga\geq1$ by
\[
\ga
:=
\sup_{\substack{\de\leq r\leq1\\ x\in[0,1]^2}}
\frac{|X\cap B(x,r)|\,\de^{-2}}
{(r/\de)^t}.
\]
Then there exists a subset $X'\subset X$ satisfying
\[
|X'|\approx \ga^{-1}|X|
\]
such that $X'$ is a Katz--Tao $(\de,t)$-set.
\end{lemma}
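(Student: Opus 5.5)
The plan is a random sparsification of the $\de$-balls of $X$ at rate roughly $\ga^{-1}$, followed by a deterministic clean-up at the finest scales; all losses are polylogarithmic in $\de^{-1}$, hence $\approx$. Write $X=\bigcup\mathcal{Q}$ with $\mathcal{Q}$ a finitely overlapping family of $\de$-balls, and put $N:=\#\mathcal{Q}\sim\de^{-2}|X|$. Then the hypothesis says that every ball $B(x,r)$ with $\de\le r\le 1$ meets at most $\lesssim\ga(r/\de)^t$ members of $\mathcal{Q}$.

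\textbf{Reduction to dyadic squares.} Every ball $B(x,r)$ is covered by $O(1)$ dyadic squares of side comparable to $r$. So it suffices to find $\mathcal{Q}'\subset\mathcal{Q}$ with $\#\mathcal{Q}'\approx\ga^{-1}N$ such that every dyadic square $D$ of side $r=2^j\de\in[\de,1]$ meets at most $\lesssim(r/\de)^t$ balls of $\mathcal{Q}'$. There are only $O(\de^{-2}\log\de^{-1})$ such squares, which will make union bounds cheap.

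\textbf{Random step.} Let $L:=\log(1/\de)$ and $p:=\min\{1,(C\ga L)^{-1}\}$. Keep each $Q\in\mathcal{Q}$ independently with probability $p$. For a dyadic square $D$ of side $r$, let $Z_D$ be the number of kept balls meeting $D$. Then $\mathbb{E}Z_D\le\mu:=C'\ga p(r/\de)^t\le (r/\de)^t/L$. I split the dyadic scales into two regimes.
\begin{itemize}
\item If $k:=(r/\de)^t\ge L$, the standard tail bound $\mathbb{P}(Z_D\ge k)\le(e\mu/k)^k\le (e/L)^{L}$ is $\le\de^{10}$ for $\de$ small. A union bound over all such $D$ then shows that, with probability $\ge 1/2$, every such square has $Z_D\le (r/\de)^t$.
\item The remaining scales satisfy $r<r_0:=\de L^{1/t}$; these are the cases where the count is too small for concentration to work. (If $t=0$ there is no such scale issue: take a single ball.) Here I thin deterministically: keep at most one selected ball per dyadic square of side $r_0$. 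Any square of side $r<r_0$ then meets $O(1)$ kept balls. Squares of larger side lose nothing they need, because they only lose balls.
\end{itemize}
This thinning removes at most a factor $\lesssim (r_0/\de)^2=L^{2/t}$ of the balls, which is polylogarithmic. To keep that loss genuinely $\approx$ for $t$ near $0$, replace $L$ by $L^{C_\eta}$ and adjust the threshold accordingly.

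\textbf{Size of $X'$, and the main obstacle.} Simultaneously with the first regime, Chernoff gives $\#\{\text{kept}\}\ge pN/2$ with probability $\ge 1/2$ whenever $pN\gtrsim L$. If instead $pN\lesssim L$, I simply take $X'$ to be a single ball; this is Katz--Tao trivially and satisfies $|X'|\approx\ga^{-1}|X|$ up to the allowed polylogarithmic loss. Choosing an outcome in which both the density and size events hold gives $X'$ with $|X'|\gtrsim (\ga L)^{-1}L^{-2/t}|X|\approx\ga^{-1}|X|$, which is Katz--Tao $(\de,t)$ with an $O(1)$ constant. The upper bound $|X'|\lesssim\ga^{-1}|X|$ can be arranged by discarding extra balls, since removing balls preserves the Katz--Tao property. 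The main obstacle is the small-scale regime, where $(r/\de)^t\lesssim\log(1/\de)$ and concentration fails. The two-regime split above is designed to handle exactly this, and the point to check carefully is that the deterministic thinning costs only a polylogarithmic factor, uniformly in $t$ in the $\approx$ sense.
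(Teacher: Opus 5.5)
The paper gives no argument for this lemma (it is quoted from Wang--Wu), so your proof has to stand on its own, and it does. Each step checks out: the dyadic reduction; the binomial tail $\binom{n}{k}p^k\le (e\mu/k)^k$ with a union bound over the $O(\delta^{-2})$ dyadic squares at scales where $(r/\delta)^t\ge L$; the one-ball-per-square thinning below $r_0=\delta L^{1/t}$; and the single-ball fallback when $pN\lesssim L$. The fallback is legitimate because $\gamma\le|X|\delta^{-2}\lesssim N$, so $\gamma^{-1}|X|\gtrsim\delta^2$. For each fixed $t>0$ your loss $L^{1+2/t}$ is $\lessapprox 1$, which is all the lemma needs; in the paper it is only applied with $t\ge(9-\sqrt{33})/4$.

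Two comments. First, your thinning is cheaper than you estimate. The smallest dyadic scale $r_1\ge r_0$ already lies in your concentration regime, so every square of side $r_1$ contains at most $(r_1/\delta)^t<2^tL$ kept balls. Keeping one ball per $r_1$-square therefore costs only a factor $O(L)$, not $(r_0/\delta)^2=L^{2/t}$. The total loss becomes $O(L^2)$ uniformly in $t\in[0,2]$. If $r_1>1$, then $\delta^{-t}\lesssim L$, and since $\gamma\gtrsim\delta^tN$ you get $pN\lesssim\delta^{-t}/L\lesssim1$, so you are in the single-ball case. This makes your sentence about replacing $L$ by $L^{C_\eta}$ unnecessary. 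As written it would not help anyway: enlarging $L$ enlarges $r_0$, and with it your bound $(r_0/\delta)^2$.

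Second, a shorter deterministic route avoids probability and the two-regime split. Scan the balls in any order, and keep $Q$ unless some dyadic square $D$ of side $r\in[\delta,1]$ containing the center of $Q$ already holds $(r/\delta)^t$ kept balls. By construction the output $\mathcal{Q}'$ is Katz--Tao. To bound the loss, charge each rejected ball to such a saturated $D$. The balls of $\mathcal{Q}$ centered in $D$ number $\lesssim\gamma(r/\delta)^t\le\gamma\,\#(\mathcal{Q}'\cap D)$, so each scale absorbs $\lesssim\gamma\,\#\mathcal{Q}'$ rejections. Summing over the $\log(1/\delta)$ scales gives $N\lesssim\gamma\log(1/\delta)\,\#\mathcal{Q}'$. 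The greedy argument thus gives a sharper, $t$-uniform loss with less bookkeeping. Your random construction is more flexible (e.g.\ it can be combined with other random constraints on the same sample), but that flexibility is not needed here.
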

\begin{proof}[Proof of Theorem \ref{main-result-rewrote} via broad estimates]
By Definition \ref{definition-broad}, we have
\begin{equation} \label{narrow-broad-decompose}
    \|E_\Phi f\|_{L^q(X)}^q \lesssim A^{O(1)} \sum_\si \|E_\Phi f_\si\|_{L^q(X)}^q + K^{O(1)}\|\BR_A E_\Phi f\|_{L^q(X)}^q.
\end{equation}
If the second term is dominant, the desired estimate follows directly
from Corollary \ref{l2-lq-broad-estimate}. Indeed, under either
\eqref{small-t} together with \eqref{pq-range-small-t}, or
\eqref{large-t} together with \eqref{pq-range-large-t}, we have $p\geq 2$. We may therefore assume that
the first term dominates.

Fix $\si$, and let $\xi_\si$ denote the center of $\si$. By a dyadic
pigeonholing argument (see, for instance, \cite[Proposition 3.2]{Wu}),
we may assume that there exist
\[
\mu\lesssim K^{\min\{2t,t+1\}}
\]
and a finitely overlapping collection $\mathcal{B}$ of
$K\times K^2$ rectangles whose major axes are parallel to
$(\Phi'(\xi_\si),-1)$, such that $\#\mathcal{B} \geq K^{100}$ and for every $B\in\mathcal{B}$,
\begin{enumerate}
    \item the quantities $\|E_\Phi f_\si\|_{L^q(B)}$ are comparable;
    \item
    \[
    |X\cap B|\sim \mu.
    \]
\end{enumerate}
Moreover,
\begin{equation}\label{after-pigenholing}
    \|E_\Phi f_\si\|_{L^q(X)}^q
    \lessapprox
    \mu K^{-3}
    \|E_\Phi f_\si\|_{L^q(\bigcup \mathcal{B})}^q.
\end{equation}
For any $r\in[1,R/K]$ and any $rK\times rK^2$ rectangle
$B_{rK\times rK^2}$ whose major axis is parallel to
$(\Phi'(\xi_\si),-1)$, the Katz--Tao $(R^{-1},t)$ condition on the
$R^{-1}$-dilate of $X$ implies that
\begin{equation}\label{Katz--Tao-pre-condition}
\#\{B\in\mathcal{B}: B\cap B_{rK\times rK^2}\neq\varnothing\}
\lesssim
\mu^{-1} r^t K^{\min\{2t,1+t\}}.
\end{equation}
For $(x_1,x_2)\in\mathbb{R}^2$, define the linear map
$\mathcal{L}_\si:\mathbb{R}^2\to\mathbb{R}^2$ by
\[
\mathcal{L}_\si(x_1,x_2)
:=
\left(
\frac{x_1+\Phi'(\xi_\si)x_2}{K},
\frac{x_2}{K^2}
\right).
\]

By rescaling, \eqref{Katz--Tao-pre-condition}, and
Lemma \ref{katz-tao-set-lem}, we may select a subcollection
$\mathcal{B}'\subset\mathcal{B}$ such that
\begin{equation}\label{Katz--Tao-ed-B}
    \#\mathcal{B}'
    \approx
    \mu K^{-\min\{2t,1+t\}}\#\mathcal{B}.
\end{equation}
Set
\[
Y_\si
:=
\mathcal{L}_\si\left(\bigcup_{B\in\mathcal{B}'}B\right).
\]
Moreover, after localization to any ball of radius $R/K^2$, the
$(R/K^2)^{-1}$-dilate of $Y_\si$ is a Katz--Tao
$\big((R/K^2)^{-1},t\big)$-set.

We claim that the $(R/K)^{-1}$-dilate of $Y_\si$ is a Katz--Tao
$\big((R/K)^{-1},t\big)$-set. In view of the preceding local
Katz--Tao estimate, it remains to show that, for every
\[
\frac{R}{K^2}\leq r\leq \frac{R}{K},
\]
the number of unit balls contained in $Y_\si$ and intersecting any
ball of radius $r$ is $\lesssim r^t$.

Suppose first that $t\geq 1$. Since $Y_\si$ is contained in a rectangle
of dimensions $R/K^2\times R/K$, for every
$r\in[R/K^2,R/K]$, the intersection of $Y_\si$ with any ball of radius
$r$ can be covered by
\[
\lesssim \frac{r}{R/K^2}
\]
balls of radius $R/K^2$. Applying the local Katz--Tao estimate on each
such ball, we obtain
\[
\#\{\text{unit balls of $Y_\si$ intersecting } B_r\}
\lesssim
\left(\frac{R}{K^2}\right)^t
\frac{r}{R/K^2}
=
r\left(\frac{R}{K^2}\right)^{t-1}
\leq r^t,
\]
where the last inequality follows from $r\geq R/K^2$.

Now suppose that $0\leq t<1$. In this case, we simply use the total
number of rectangles in $\mathcal{B}'$. By \eqref{Katz--Tao-ed-B},
\[
\#\mathcal{B}'
\lesssim
\mu K^{-2t}\#\mathcal{B}
\lesssim
K^{-2t}R^t
=
\left(\frac{R}{K^2}\right)^t
\leq r^t.
\]
This proves the claim.

We now perform the standard parabolic rescaling in order to apply induction on scales. Define
\[
g_\si(\xi)
:=
f_\si\left(\xi_\si+K^{-1}\xi\right)
\]
and
\[
\Phi_\si(\xi)
:=
K^2\left[
\Phi\left(\xi_\si+K^{-1}\xi\right)
-\Phi(\xi_\si)
-\Phi'(\xi_\si)K^{-1}\xi
\right].
\]
By \eqref{narrow-broad-decompose}, \eqref{after-pigenholing}, and \eqref{Katz--Tao-ed-B}, we obtain
\[\|E_\Phi f\|_{L^q(X)}^q \lesssim A^{O(1)} K^{\min\{2t,t+1\}-q} \sum_\si \sum_{B_{R/K\times R}} \|E_{\Phi_\si} g_\si\|_{L^q\left(\mathcal{L}_\si
\left(
B_{R/K\times R}\cap
\bigcup \mathcal{B}'
\right)
\right)}^q\]
Since $\Phi_\si$ also satisfies \eqref{phi-condition}, induction on scale gives
\[\|E_\Phi f\|_{L^q(X)}^q \lesssim_\e (R/K)^\e A^{O(1)} K^{\min\{2t,t+1\}-q+\frac{q}{p}} \sum_\si \|f_\si\|_p^q\]
Under either the assumptions \eqref{small-t} and
\eqref{pq-range-small-t}, or the assumptions \eqref{large-t} and
\eqref{pq-range-large-t}, we have $q\geq p$ and
\[
\min\{2t,t+1\}-q+\frac{q}{p}=0.
\]
Therefore, the induction closes.
\end{proof}
Determining the smallest exponent $\alpha\geq 0$ for which the estimate
\[
\|\BR_A E_\Phi f\|_{L^2(X)}
\lesssim_\e
R^\e |X|^\alpha \|f\|_2
\]
holds for all $f$ and all unions $X$ of unit balls whose $R^{-1}$-dilate
is a Katz--Tao $(R^{-1},t)$-set is a difficult problem.
In \cite{Wu}, an example was constructed showing that
$\alpha\geq 1/6$ when $t=1$. In this paper, we extend this construction
to the range $1/2\leq t\leq 2$.

\begin{theorem}\label{broad-lower-bound-example}
Let $\Phi(\xi)=\xi^2$. For every $1/2\leq t\leq 2$, there exist
$f\in\mathcal{S}(\mathbb{R})$ with
$\operatorname{supp} f\subset[-1,1]$ and a union $X\subset B_R(0)$
of unit balls such that the $R^{-1}$-dilate of $X$ is a Katz--Tao
$(R^{-1},t)$-set and
\[
\|\BR_A E_\Phi f\|_{L^2(X)}
\gtrsim_\e
R^{-\e}|X|^{\frac{2t-1}{6t}}\|f\|_2.
\]
\end{theorem}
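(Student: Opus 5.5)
The plan is to build an explicit Talbot-type (periodic-frequency) example that generalizes Wu's construction for $t=1$ \cite{Wu}. Take $\Phi(\xi)=\xi^2$ and a parameter $1\le N\le R^{1/2}$, to be optimized at the end. Set
\[
f(\xi)=\sum_{|k|\le N}\vp\big(R(\xi-k/N)\big),
\]
where $\vp$ is a fixed bump. Then $\|f\|_2^2\sim N/R$. For $|x_2|\lesssim R$ we have
\[
E_\Phi f(x)\approx R^{-1}\sum_{|k|\le N} e^{i(x_1k/N+x_2k^2/N^2)},
\]
and this is essentially constant on unit balls. It is a quadratic Weyl sum. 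At heights $x_2\in 2\pi N^2\mathbb{Z}$ and $x_1\in 2\pi N\mathbb{Z}$ it attains full size $|E_\Phi f|\sim N/R$. More generally, at the Talbot points $x_2/(2\pi N^2)=a/q$ with $q$ small, there are about $q$ shifted copies of size $\sim q^{-1/2}N/R$, spaced $2\pi N/q$ apart in $x_1$.

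Broadness is checked before choosing $X$. Each $K^{-1}$-interval $\si$ contains $\sim N/K$ of the frequencies $k/N$. So $E_\Phi f_\si$ is the same Weyl sum restricted to a block of consecutive $k$, and at the peaks it has size $\sim N/(KR)$ uniformly in $\si$. Hence $\BR_A E_\Phi f\gtrsim K^{-1}|E_\Phi f|$ on the peak set. The factor $K^{-1}=R^{-\e^{10}}$ is absorbed into the $R^{-\e}$ loss.

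The set $X$ will be the union of unit balls centered at the full-size peaks $(2\pi N m,2\pi N^2 n)$, possibly together with the rational-height peaks with small $q$ if needed, intersected with $B_R(0)$ and thinned so that the $R^{-1}$-dilate is Katz--Tao $(R^{-1},t)$. The peaks form a lattice with spacings $N\times N^2$, so the thinning should keep a sublattice or a sub-box. I will keep the points inside a box of dimensions $\rho_1\times\rho_2$ and count unit balls meeting each $B_r$ at scales $r\in[1,N]$, $[N,N^2]$, and $[N^2,R]$. Choosing $\rho_1,\rho_2$ as the largest values compatible with the bound $r^t$ at each scale determines $|X|$ as a function of $N,R,t$. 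Then $\|\BR_AE_\Phi f\|_{L^2(X)}^2\gtrsim K^{-2}|X|N^2/R^2$, and comparing with $\|f\|_2^2=N/R$ gives
\[
\frac{\|\BR_A E_\Phi f\|_{L^2(X)}}{\|f\|_2}\gtrsim K^{-1}|X|^{1/2}(N/R)^{1/2}.
\]

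The final step is to optimize $N$, and, if full-lattice peaks are too sparse, to also include the $q\sim Q$ peaks with $Q$ a second parameter, so that this ratio becomes $|X|^{(2t-1)/(6t)}$. I expect the restriction $t\ge 1/2$ to appear exactly where the optimal $N$ must stay in the admissible range $[1,R^{1/2}]$. The case $t=1$ should reproduce Wu's exponent $1/6$, and I will use it as a sanity check.

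The main obstacle is this bookkeeping step. One has to choose the shape of $X$ (how many Talbot layers, and what $x_1$-extent) so that the Katz--Tao condition holds at all intermediate scales simultaneously while $|X|$ stays large. If the pure full-peak lattice does not give the exponent $(2t-1)/(6t)$, the first thing I would try is adding the rational heights with denominators $q\lesssim Q$. This trades amplitude $q^{-1/2}$ for density $q$, and the balance between the two should produce the $6t$ in the denominator.
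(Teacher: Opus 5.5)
Your approach is correct, and it reaches the result by a different and simpler route than the paper. The optimization you left open comes out in favor of the simplest option: you need no thinning and no rational heights.

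\textbf{Completing your construction.} Take $N=R^{(2-t)/3}$. This lies in $[1,R^{1/2}]$ exactly when $1/2\le t\le 2$, as you predicted. Let $X$ be the union of unit balls centered at all full peaks $(2\pi Nm,2\pi N^2n)$ in $B_R(0)$. A ball of radius $r\in[1,R]$ contains:
\begin{enumerate}
\item $O(1)$ peaks if $r\le N$;
\item $O(r/N)$ peaks if $N\le r\le N^2$;
\item $O(r^2/N^3)$ peaks if $N^2\le r\le R$.
\end{enumerate}
The second count is $\lesssim r^t$ because $r^{1-t}\le\max\{1,N^{2-2t}\}\le N$ when $t\ge 1/2$. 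The third is $\lesssim r^t$ because $r^{2-t}\le R^{2-t}=N^3$. Hence the $R^{-1}$-dilate of $X$ is Katz--Tao $(R^{-1},t)$ and $|X|\sim R^2/N^3=R^t$. Your ratio then becomes $K^{-1}|X|^{1/2}(N/R)^{1/2}=K^{-1}R^{(2t-1)/6}=K^{-1}|X|^{(2t-1)/(6t)}$.

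Two minor repairs are needed. First, your broadness claim requires $N\gg K$. When $t$ is within $O(\e^{10})$ of $2$, use $N=K^2$ instead; the lattice only gets sparser and the loss is $K^{O(1)}$. Second, the lower bound $|E_\Phi f_\si|\gtrsim\|f_\si\|_1$ at a peak persists only on a ball of small constant radius, via $|\nabla E_\Phi f_\si|\lesssim\|f_\si\|_1$. This is harmless.

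\textbf{Comparison with the paper.} The paper takes the other branch of your dichotomy. It uses all $N=R^{1/2}$ frequencies, so $\|f\|_2\sim R^{-1/4}$. It places $X$ at the rational Talbot points $x_1=R^{1/2}(n+a/q)$, $x_2=Rb/q$ with $a,b,q\sim Q=R^{(2t-1)/6}$. There Gauss sums give $|E_\Phi f|\approx R^{-1/2}Q^{-1/2}$, and $|X|\approx R^{1/2}Q^3=R^t$. This is exactly the amplitude-for-density trade you held in reserve.

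That route costs more work:
\begin{enumerate}
\item restricting to an admissible family of $(a,b,q)$ where the Gauss sums have square-root size;
\item pigeonholing over $K^{-1}$-caps so that the incomplete sums stay large on a fixed proportion of $X$;
\item verifying the Katz--Tao condition below scale $R^{1/2}$ by a rational-point (determinant) count, which the paper itself notes is delicate for $t>5/4$.
\end{enumerate}

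Your version instead trades frequency count for spatial sparsity. Every cap is trivially at full amplitude, and the Katz--Tao check is plain lattice counting. Both constructions give the same exponent $(2t-1)/(6t)$. The paper's example has $f$ occupying every $R^{-1/2}$-cap, as in Wu's $t=1$ example, but the theorem does not require this.
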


The proof becomes considerably more delicate when $t>5/4$.
We discuss this case in Section \ref{proof-example}.
\bigskip

\section{$L^2$-Broad Estimates}\label{proof-main-result}
In this section, we establish the $L^2$ broad estimates. Throughout this
section, whenever we say that a dilate of a set is a Katz--Tao set, we
allow an arbitrary translation of the dilated set. Our main result is
the following.

\begin{theorem}\label{l2-broad-estimate}
Let $0\leq t\leq 2$, $1\leq r\leq R$, and suppose that
$A\sim r^{\e^{100}}$. Let $X\subset B_R(0)$ be a union of unit balls,
and assume that the $r^{-1}$-dilate of $X$ is a Katz--Tao
$(r^{-1},t)$-set. Then, for every
$f\in\mathcal{S}(\mathbb{R})$ with
$\operatorname{supp}f\subset[-1,1]$ and every
$\Phi\in C^\infty([-1,1];\mathbb{R})$ satisfying
\eqref{phi-condition}, we have
\[
\|\BR_A E_\Phi f\|_{L^2(X)}
\lesssim_\e
R^\e r^\e |X|^{\frac{2t}{3t+6}}\|f\|_2,
\qquad 0\leq t<1,
\]
and
\[
\|\BR_A E_\Phi f\|_{L^2(X)}
\lesssim_\e
R^\e r^\e |X|^{\frac{4t-2}{9t}}\|f\|_2,
\qquad 1\leq t\leq2.
\]
\end{theorem}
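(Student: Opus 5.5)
Following Wu's argument for $t=1$, we induct on scales: we assume the estimate at all scales $r'\le r/2$ (or $r'\le r^{1/2}$) with the same Katz--Tao hypothesis and prove it at scale $r$. The key structural point is that the $r^{-1}$-Katz--Tao hypothesis only controls $X$ inside balls of radius $r$. So we first cover $B_R(0)$ by $r$-balls $B$. Summing the $L^2$ bound over them gives
\[
\sum_B |X\cap B|^{2\alpha}\|f\|_{L^2}^2\lesssim |X|^{2\alpha}\|f\|_2^2,
\]
using that the wave packets at scale $r$ on different $r$-balls give orthogonal pieces of $f$ (local $L^2$ orthogonality) and $2\alpha\le1$. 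This reduces matters to $R=r$.

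On $B_r$ we perform the wave packet decomposition at scale $r$ with $r^{1/2}\times r$ tubes. By dyadic pigeonholing we may assume:
\begin{enumerate}
\item all $\|f_T\|_2$ are comparable, with $T$ ranging over a family $\mathbb{T}$;
\item the $r^{1/2}$-balls $Q\subset N_{r^{1/2}}(X)$ that carry most of $\|\BR_AE_\Phi f\|_{L^2(X)}^2$ each meet $\sim M$ tubes;
\item each such $Q$ satisfies $|X\cap Q|\sim \nu$;
\item the shading is uniform in angle, meaning that inside each $Q$ the tubes in each $r^{-\e_1/2}$-arc of directions number either $\sim M'$ or $0$.
\end{enumerate}
The broad condition, with $A\sim r^{\e^{100}}$, forces the directions at each $Q$ to be spread over $\gtrsim A$ caps $\si$. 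This is exactly the two-ends (in direction) hypothesis $\#\cT_\si(Q)\lesssim \de^{\e_2}\#\cT(Q)$ of Corollary \ref{two-ends-furstenberg-estimate-dual} with $\de=r^{-1/2}$. If the shading is not two-ends at scale $r^{-\e_1/2}$, i.e. most of the mass lies in a narrower angular sector, we fall into a narrow-type situation handled by rescaling and the induction hypothesis. This is where the factor $r^\e$ in the statement is consumed.

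We estimate the broad norm on $X$ in two ways.
\begin{itemize}
\item \textbf{Refined decoupling.} By H\"older on $X\cap Q$ and Theorem \ref{refined-decoupling-thm},
\[
\|\BR_A E_\Phi f\|_{L^2(X)}\le |X|^{1/3}\|E_\Phi f\|_{L^6(X)}
\lessapprox |X|^{1/3}M^{1/3}\Big(\sum_T\|E_\Phi f_T\|_6^6\Big)^{1/6}.
\]
Using $\|E_\Phi f_T\|_6^6\sim r^{-1}\|E_\Phi f_T\|_2^2$ per tube, this gives roughly $\big(|X|M/\#\mathbb{T}\big)^{1/3}$-type gains relative to $\|f\|_2$.
\item \textbf{Trivial $L^2$ bound.} Summing over tubes, $\|E_\Phi f\|_{L^2(X)}^2\lesssim M\,|X|\,r^{-1/2}\|f\|_2^2$ up to normalisation.
\end{itemize}

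To compare $\#\mathbb{T}$ with $\#\cq$, we rescale by $r^{-1}$. The balls $Q$ become $\de$-balls and form a Katz--Tao $(\de,t)$-set: this follows from the hypothesis together with $|X\cap Q|\sim\nu$ and Lemma \ref{katz-tao-set-lem}, which loses only a factor $\ga$ that we balance against $\nu$. The tubes become $\de\times1$ rectangles. Corollary \ref{two-ends-furstenberg-estimate-dual} then gives
\[
\#\mathbb{T}\gtrapprox r^{-t/4}\#\cq\, M^{1+t/2}\quad\text{for } t<1,\qquad
\#\mathbb{T}\gtrapprox r^{-t/4}\#\cq\, M^{2-t/2}\quad\text{for } t\ge1.
\]

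We then combine the two bounds. In the refined decoupling bound we substitute $|X|\approx\nu\#\cq$ and the Furstenberg lower bound for $\#\mathbb{T}$. We take the minimum with the trivial bound, where large $M$ is the dangerous regime, and use the constraints $\nu\lesssim r^{t/2}$ and $|X|\le r^t$. Optimising over $M$, $\nu$ and $\#\cq$ should yield exactly $|X|^{2t/(3t+6)}$ for $t<1$ and $|X|^{(4t-2)/(9t)}$ for $t\ge1$. As a sanity check, at $t=1$ both exponents equal $2/9$, matching Wu.

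The main obstacle is making the two-ends and uniform-direction hypotheses of the corollary compatible with the broad norm. The broad condition only gives separation at scale $K^{-1}$, not at scale $r^{-\e_1/2}$, so a multi-scale pigeonhole is needed. In the bad case, where tubes at most $Q$ concentrate in an $r^{-\e_1/2}$-sector, I would first try to rescale parabolically to scale $r^{1-\e_1}$ and invoke the induction hypothesis on the rescaled Katz--Tao set, as in the narrow argument above. It still has to be checked that the power of $|X|$ survives this rescaling.
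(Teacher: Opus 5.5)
The gap is in the final optimisation: refined decoupling and the standard $L^2$ bound alone cannot produce the stated power of $|X|$. Insert the Furstenberg bound, including the loss from Lemma \ref{katz-tao-set-lem}. This gives $\#\mathbb{T}\gtrapprox K^{-O(1)}\nu r^{-3t/4}\#\cq\,M^{1+t/2}$ (resp.\ $M^{2-t/2}$), with $M$ the per-cap count. The factor $|X|\sim\nu\,\#\cq$ then cancels from both of your estimates. Refined decoupling becomes $M^{-t/3}r^{t/2}\|f\|_2^2$ (resp.\ $M^{(t-2)/3}r^{t/2}\|f\|_2^2$). The standard bound, correctly $\lesssim KM^2|X|r^{-1/2}(\#\mathbb{T})^{-1}\|f\|_2^2$, becomes $M^{1-t/2}r^{3t/4-1/2}\|f\|_2^2$ (resp.\ $M^{t/2}r^{3t/4-1/2}\|f\|_2^2$). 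Optimising in $M$, the better of the two is at worst $r^{2t/(6-t)}\|f\|_2^2$ (resp.\ $r^{(4t-2)/(t+4)}\|f\|_2^2$), independently of $|X|$. This is $\lesssim|X|^{4t/(3t+6)}$ (resp.\ $|X|^{(8t-4)/(9t)}$) only when $|X|\gtrsim r^{(3t+6)/(2(6-t))}$ (resp.\ $|X|\gtrsim r^{9t/(2(t+4))}$). For smaller $X$ you need a bound that improves as $|X|$ shrinks, and the trivial $|X|\|f\|_2^2$ is too weak. The paper's missing third ingredient is Wu's hairbrush estimate $\|\BR_A E_\Phi f\|_{L^2(X)}^2\lesssim r^{O(\e^2)}|X|r^{-1/2}\|f\|_2^2$. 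Balancing $|X|r^{-1/2}$ against $r^{2t/(6-t)}$ (resp.\ $r^{(4t-2)/(t+4)}$) is exactly what yields the exponents $\frac{2t}{3t+6}$ and $\frac{4t-2}{9t}$.

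The hairbrush estimate needs a different two-ends property from the one you discuss: each tube's shading must be spread \emph{along} the tube rather than sitting in a single $r^{1-\e^2}$-segment. The paper arranges this by a spatial two-ends reduction over balls $B_k$ of radius $r^{1-\e^2}$. If each $T$ has only $\beta\le r^{\e^4}$ active segments, the induction hypothesis at scale $r^{1-\e^2}$ is applied to $(f_k,X_k)$. This gains $r^{-\e^3}$ and loses only $r^{\e^4}$, via $\sum_k\|f_k\|_2^2\lesssim r^{\e^4}\|f\|_2^2$, and it is where the induction and the factor $r^\e$ are used. The angular hypothesis of Corollary \ref{two-ends-furstenberg-estimate-dual}, by contrast, comes directly from broadness at a cost of $K^{O(1)}$. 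Take $\de^{\e_1}=K^{-1}$ and $\de^{\e_2}\approx\bar A^{-1}$, using that $\#\cT_\si(Q)\sim M$ or $0$ while $\#\cT(Q)\gtrsim\bar AM$. So your angular narrow case and its rescaling are unnecessary.

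Finally, your opening reduction from $R$ to $r$ is wrong as justified. Distinct $r$-balls are not orthogonal: $E_\Phi f(x_0+\cdot)=E_\Phi\big(e^{ix_0\cdot(\xi,\Phi(\xi))}f\big)$, so every ball sees all of $\|f\|_2$. It is also unnecessary. Katz--Tao sets are by definition subsets of $B_1(0)$ up to translation, so the hypothesis already places $X$ in a ball of radius $r$.
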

We now begin the proof. By a standard two-ends reduction
(see, for example, \cite{Wu,Wang-Wu24}), we may assume that there exist
parameters $\lambda,\beta>0$, a collection $\mathbb{T}$ of
$r$-scale wave packets, a parameter $A'\gtrapprox A$, a finitely
overlapping collection $\mathbb{B}$ of balls of radius
$r^{1-\e^2}$, each intersecting $X$, and a shading $Y$ such that, for
each $T\in\mathbb{T}$, $Y(T)$ is a collection of
$r^{1-\e^2}\times r^{1/2+\de_0}$ tubes $J\subset T$ satisfying the
following properties:
\begin{enumerate}
    \item For every $T\in\mathbb{T}$, the quantities $\|f_T\|_2$ are comparable, and 
    \[
    \#Y(T)\sim\beta.
    \]

    \item For every $T\in\mathbb{T}$ and every $J\in Y(T)$, the tube $J$
    is a union of $\sim\lambda$ finitely overlapping balls of radius
    $r^{1/2+\de_0}$.

    \item We have
    \begin{equation} \label{two-end-decompos}
    \|\BR_A E_\Phi f\|_{L^2(X)}^2
    \lessapprox
    \sum_{B_k \in \mathbb{B}}
    \left\|
    \BR_{A'} E_\Phi
    f_k
    \right\|_{L^2(X_k)}^2.
    \end{equation}
    where \[\mathbb{T}_k := \{T\in\mathbb{T},\bigcup Y(T) \cap B_k \cap X \neq \emptyset\}\]
    \[f_k := \sum_{T\in\mathbb{T}_k}f_T, \qquad X_k:= B_k \cap X \cap
    \bigcup_{T\in\mathbb{T}_k}\bigcup Y(T)\]
\end{enumerate}
\subsection{Non-two-ends scenario}
If $\beta\leq r^{\e^4}$, we apply induction on scales. Since
\[
A'\geq r^{(1-\e^2)\e^{100}}
\]
and the $r^{\e^2-1}$-dilate of $X_k$ is a Katz--Tao
$(r^{\e^2-1},t)$-set, the induction hypothesis gives
\[
\left\|
    \BR_{A'} E_\Phi f_k
\right\|_{L^2(X_k)}^2
\lesssim_\e
R^\e r^{\e-\e^3}
|X|^{\frac{4t}{3t+6}}
\|f_k\|_2^2,
\qquad 0\leq t<1,
\]
and
\[
\left\|
    \BR_{A'} E_\Phi f_k
\right\|_{L^2(X_k)}^2
\lesssim_\e
R^\e r^{\e-\e^3}
|X|^{\frac{8t-4}{9t}}
\|f_k\|_2^2,
\qquad 1\leq t\leq2.
\]

Combining these estimates with \eqref{two-end-decompos}, we obtain
\begin{equation}\label{indu-scale-small-t}
\left\|
    \BR_{A'} E_\Phi f
\right\|_{L^2(X)}^2
\lessapprox_\e
R^\e r^{\e-\e^3}
|X|^{\frac{4t}{3t+6}}
\sum_{B_k\in\mathbb{B}}\|f_k\|_2^2,
\qquad 0\leq t<1,
\end{equation}
and
\begin{equation}\label{indu-scale-large-t}
\left\|
    \BR_{A'} E_\Phi f
\right\|_{L^2(X)}^2
\lessapprox_\e
R^\e r^{\e-\e^3}
|X|^{\frac{8t-4}{9t}}
\sum_{B_k\in\mathbb{B}}\|f_k\|_2^2,
\qquad 1\leq t\leq2.
\end{equation}

Since $\beta\leq r^{\e^4}$, each wave packet $T$ belongs to at most
$\lesssim r^{\e^4}$ of the collections $\mathbb{T}_k$. Hence, by
Lemma \ref{l2-orthogonalty},
\[
\sum_{B_k\in\mathbb{B}}\|f_k\|_2^2
\lesssim
r^{\e^4}\|f\|_2^2.
\]
Substituting this into \eqref{indu-scale-small-t} and
\eqref{indu-scale-large-t} closes the induction. 

\medskip

\subsection{Two-ends scenario}
If $\beta>r^{\e^4}$, we make use of three estimates. The first was established
in \cite{Wu} via the hairbrush argument.

\begin{proposition}\label{hair-brush-estimate}
If $\beta>r^{\e^4}$, then
\begin{equation} \label{estimate-0}
    \|\BR_A E_\Phi f\|_{L^2(X)}^2
\lesssim
r^{O(\e^2)}
|X|r^{-1/2}\|f\|_2^2.
\end{equation}
\end{proposition}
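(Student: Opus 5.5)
We prove \eqref{estimate-0} with a bilinear reduction followed by a hairbrush-type $L^2$ count; the geometry never uses the Katz--Tao hypothesis, only the two-ends structure $\beta>r^{\e^4}$. Since $A\sim r^{\e^{100}}$ is large, the broad part is controlled by bilinear products of $K^{-1}$-transverse caps:
\[
|\BR_A E_\Phi f(x)|^2\lesssim K^{O(1)}\max_{\si_1,\si_2\ \mathrm{transverse}}|E_\Phi f_{\si_1}(x)E_\Phi f_{\si_2}(x)|.
\]
It therefore suffices to bound $\sum_{\si_1,\si_2}\int_X |E_\Phi f_{\si_1}E_\Phi f_{\si_2}|$. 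The factor $K^{O(1)}=r^{O(\e^{10})}$ is absorbed into $r^{O(\e^2)}$. We also work with the two-ends data $(\mathbb{T},Y,\lambda,\beta)$ produced by the reduction, through \eqref{two-end-decompos}.

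Fix a unit ball $Q\subset X$ and expand each factor into wave packets via Lemma \ref{wpt} and Lemma \ref{spatial-concentration}. On $Q$, only tubes $T\ni Q$ contribute, and only those with $Q$ in the shading $\bigcup Y(T)$. Since all $\|f_T\|_2$ are comparable, say $\sim w$, and $|E_\Phi f_T|\lesssim r^{-1/2}\|f_T\|_2$ pointwise, Cauchy--Schwarz on the bilinear product gives
\[
\int_Q|E_\Phi f_{\si_1}E_\Phi f_{\si_2}|\lesssim r^{-1}w^2\,\#\mathbb{T}_{\si_1}(Q)\,\#\mathbb{T}_{\si_2}(Q),
\]
where $\mathbb{T}_{\si_j}(Q)$ denotes the set of contributing tubes from $\si_j$. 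The simpler local $L^2$ bound $\lesssim r^{-1}w^2\,\#\mathbb{T}(Q)$ is not enough here; we need the product form, with one factor to be bounded by the hairbrush.

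The core step is the hairbrush bound. For a tube $T_1$ from $\si_1$, consider all tubes $T_2$ from $\si_2$ meeting $\bigcup Y(T_1)$. These make angle $\gtrsim K^{-1}$ with $T_1$. By the two-ends property, the shading of $T_1$ consists of $\beta>r^{\e^4}$ segments of length $r^{1-\e^2}$, spread along $T_1$. Crossing tubes through distinct segments are therefore essentially distinct: each $T_2$ meets $T_1$ in a set of length $\lesssim K r^{1/2}$. Hence
\[
\sum_{Q\subset \bigcup Y(T_1)}\#\mathbb{T}_{\si_2}(Q)\lesssim r^{O(\e^2)}\,\#\mathbb{T}_{\si_2}\cdot r^{1/2}.
\]
The count is: each transverse $T_2$ meets $T_1$ in $\lesssim r^{1/2}K$ unit balls. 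Summing over $T_1$ and using $\#\mathbb{T}\, w^2\sim\|f\|_2^2$ (Lemma \ref{l2-orthogonalty}) converts the bilinear sum into $r^{O(\e^2)}r^{-1/2}\|f\|_2^2$ times the number of unit balls of $X$ that the brush actually fills.

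The main obstacle is extracting exactly one factor of $|X|$ rather than $\#\mathbb{T}$. We plan to do this by duality. Dualize $\|\BR E f\|_{L^2(X)}^2$ against $\Id_X$, bound the pointwise multiplicity by $\#\mathbb{T}_{\si_2}(Q)$, and then average over $Q\in X$ using the brush count. The two-ends hypothesis $\beta>r^{\e^4}$ is what rules out all brush tubes concentrating in a single $r^{1-\e^2}$-segment. That concentration would lose the gain, and excluding it is where the loss $r^{O(\e^2)}$ enters. Finally, we sum over the $K^{O(1)}$ pairs $(\si_1,\si_2)$ and over the balls $B_k$ in \eqref{two-end-decompos}, using finite overlap, to conclude \eqref{estimate-0}.
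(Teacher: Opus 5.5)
There is a genuine gap, and it sits in the step you defer. The paper itself does not reprove this proposition; it imports it from \cite{Wu}, where it comes from a hairbrush-type count that uses the two-ends structure in an essential way.

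Your broad-to-bilinear reduction is fine, but two computations after it are off. A scale-$r$ packet satisfies $|E_\Phi f_T|\le\|f_T\|_1\lesssim|\theta|^{1/2}\|f_T\|_2\sim r^{-1/4}\|f_T\|_2$, not $r^{-1/2}\|f_T\|_2$. So the unit-ball bound is $\int_Q|E_\Phi f_{\si_1}E_\Phi f_{\si_2}|\lesssim r^{-1/2}w^2\,\#\mathbb{T}_{\si_1}(Q)\,\#\mathbb{T}_{\si_2}(Q)$, which is sharp when the packets through $Q$ are coherent. Also, two $K^{-1}$-transverse $r^{1/2}\times r$ tubes meet in a parallelogram of area $\sim Kr$, hence in up to $\sim Kr$ unit balls, not $Kr^{1/2}$. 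The smaller count for unit balls of $X$ would need the Katz--Tao hypothesis with $t\le 1$, which you say you avoid. With the correct normalization, and even granting your brush count, your chain yields only $\int_X|E_\Phi f_{\si_1}E_\Phi f_{\si_2}|\lesssim r^{O(\e^2)}\#\mathbb{T}\,\|f\|_2^2$. That has no $r^{-1/2}$ gain, and $\#\mathbb{T}$ can be far larger than $r^{-1/2}|X|$: take all $\sim r$ tubes and one unit ball of $X$ in each $r^{1/2}$-ball.

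More fundamentally, the two-ends hypothesis does no work in your argument: that a transverse $T_2$ meets $T_1$ in $O(K)$ balls of radius $r^{1/2}$ is pure transversality. Yet \eqref{estimate-0} fails without two-ends. Take $f$ a smooth bump with $f\equiv 1$ on $[-1/2,1/2]$ and $X$ the unit ball at the origin, which is Katz--Tao for every $t$. Then $|E_\Phi f_\si|\sim K^{-1}$ on $X$ for the $\gg A$ caps $\si\subset[-1/2,1/2]$. So the left side is $\gtrsim K^{-2}\|f\|_2^2$, while the right side is $r^{-1/2+O(\e^2)}\|f\|_2^2$. Hence your ``duality/averaging'' step is the whole proof, not a finishing touch.

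What must be shown is, in effect, $\int_X\#\mathbb{T}_{\si_1}(x)\,\#\mathbb{T}_{\si_2}(x)\,dx\lesssim r^{O(\e^2)}|X|\,\#\mathbb{T}$. After pigeonholing (each shading ball carries $\sim\mu$ tubes, each shading has $\sim\beta\lambda$ balls), this is the multiplicity bound $\mu^2\lesssim r^{O(\e^2)}\#\mathbb{T}$. Your hairbrush observation only gives $\#\mathbb{T}\gtrsim K^{-1}\beta\lambda\mu$, which is too weak when $\beta\lambda\ll\mu$.

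Two-ends supplies the right bound through a bush count. Fix an $r^{1/2}$-ball $Q_0$ and the $\mu$ tubes through it. Since $\beta>r^{\e^4}$, almost all of each shading lies at distance $\ge r^{1-\e^2}$ from $Q_0$. On that part the shadings overlap with multiplicity $r^{O(\e^2)}$, because two such tubes can share a ball that far out only if their directions differ by $\lesssim r^{-1/2+\e^2+\de_0}$. This gives $\gtrsim r^{-O(\e^2)}\mu\beta\lambda$ distinct shading balls. Double counting incidences, $\#\mathbb{T}\cdot\beta\lambda\gtrsim\mu\cdot\#\{\text{shading balls}\}$, then gives $\#\mathbb{T}\gtrsim r^{-O(\e^2)}\mu^2$. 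Combined with $|\BR_A E_\Phi f|^2\lesssim r^{-1/2}w^2\mu^2$ and $\|f\|_2^2\sim w^2\#\mathbb{T}$, this yields \eqref{estimate-0}, modulo the localization bookkeeping in \eqref{two-end-decompos}. Your proposal contains no step of this kind, so as it stands it does not prove the proposition.
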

We next turn to the remaining two estimates. By a dyadic pigeonholing
argument, we may assume that there exist parameters $s,M>0$, a collection
$\mathcal{Q}$ of $r^{1/2}$-cubes, a collection $\mathcal{T}$ of
$r$-scale tubes, and an integer $\bar{A}\gtrapprox A$ with the following
properties.

For each $Q\in\mathcal{Q}$, define
\[
\mathcal{T}(Q)
:=
\{T\in\mathcal{T}:T\cap Q\neq\varnothing\},
\]
and, for each $K^{-1}$-cap $\si$, define
\[
\mathcal{T}_\si(Q)
:=
\{T\in\mathcal{T}(Q):\xi_{\theta(T)}\in\si\}.
\]
Also set
\[
f_Q
:=
\sum_{T\in\mathcal{T}(Q)}f_T.
\]

After pigeonholing, we may further assume that:
\begin{enumerate}
    \item For every $Q\in\mathcal{Q}$,
    \[
    |X\cap Q|\sim s.
    \]

    \item For every $Q\in\mathcal{Q}$ and every $K^{-1}$-cap $\si$,
    \[
    \#\mathcal{T}_\si(Q)=0
    \quad\text{or}\quad
    \#\mathcal{T}_\si(Q)\sim M,
    \]
    while
    \[
    \#\mathcal{T}(Q)\gtrsim \bar{A}M.
    \]

    \item The quantities
    \[
    \|\BR_{\bar{A}}E_\Phi f_Q\|_{L^2(X\cap Q)}
    \]
    are comparable as $Q$ ranges over $\mathcal{Q}$.

    \item
    \[
    \mathcal{T}
    =
    \bigcup_{Q\in\mathcal{Q}}\mathcal{T}(Q).
    \]

    \item The quantities $\|f_T\|_2$ are comparable for
    $T\in\mathcal{T}$.

    \item We have
    \[
    \|\BR_A E_\Phi f\|_{L^2(X)}^2
    \lessapprox
    \sum_{Q\in\mathcal{Q}}
    \|\BR_{\bar{A}}E_\Phi f_Q\|_{L^2(X\cap Q)}^2.
    \]
\end{enumerate}
For any ball $B$ of radius $r_0\geq r^{1/2}$, the number of cubes
$Q\in\mathcal{Q}$ intersecting $B$ is bounded by
\[
\lesssim
\frac{r_0^t}{s}
=
\left(\frac{r_0}{r^{1/2}}\right)^t
\frac{r^{t/2}}{s}.
\]
Hence, by Lemma \ref{katz-tao-set-lem}, there exists a subcollection
$\mathcal{Q}'\subset\mathcal{Q}$ such that
\begin{equation}\label{subcollection-Q}
\#\mathcal{Q}'
\gtrapprox
s r^{-t/2}\#\mathcal{Q},
\end{equation}
and the $r^{-1/2}$-dilate of $\bigcup \mathcal{Q}'$ is a Katz--Tao $(r^{-1/2},t)$-set.

Define
\[
\mathcal{T}'
:=
\bigcup_{Q\in\mathcal{Q}'}
\mathcal{T}(Q).
\]
Applying Lemma \ref{katz-tao-set-lem} and \eqref{subcollection-Q}, we obtain
\begin{equation}\label{card-T-prime}
\#\mathcal{T}'
\gtrapprox_{\e}
K^{-O(1)} s r^{-3t/4}\#\cq
\begin{cases}
M^{1+t/2},
& 0\leq t<1,\\[4pt]
M^{2-t/2},
& 1\leq t\leq2.
\end{cases}
\end{equation}
\smallskip

\subsubsection{A standard $L^2$ estimate}

We first recall a standard $L^2$ estimate established in \cite{Wu}.

\begin{proposition}
We have
\[
\|\BR_A E_\Phi f\|_{L^2(X)}^2
\lesssim
K M^2 s r^{-1/2}
(\#\cq)(\#\mathcal{T}')^{-1}
\|f\|_2^2.
\]
\end{proposition}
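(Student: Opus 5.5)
The bound should follow from a crude pointwise estimate on each $r^{1/2}$-cube, using only the normalized size of a single wave packet, followed by $L^2$-orthogonality. No decoupling or incidence input is needed. The factors $M^2$, $s$ and $\#\cq$ come from counting, and $(\#\mathcal T')^{-1}$ comes from the comparability of the $\|f_T\|_2$.

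\emph{Reduction to cubes.} By property (6) of the pigeonholing,
\[
\|\BR_A E_\Phi f\|_{L^2(X)}^2\lessapprox \sum_{Q\in\cq}\|\BR_{\bar A}E_\Phi f_Q\|_{L^2(X\cap Q)}^2 .
\]
It therefore suffices to bound each term by $K M^2 s\, r^{-1/2}\max_{T}\|f_T\|_2^2$.

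\emph{Pointwise bound on a cube.} Fix $Q$ and $x\in X\cap Q$. By Definition \ref{definition-broad}, the broad part is bounded by $|E_\Phi f_{Q,\si}(x)|$ for some cap $\si$, where $f_{Q,\si}=\sum_{T\in\mathcal T_\si(Q)}f_T$. This gives
\[
|\BR_{\bar A}E_\Phi f_Q(x)|^2\le \sum_{\si}|E_\Phi f_{Q,\si}(x)|^2 ,
\]
and the sum has at most $\sim K$ nonzero terms.

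For each $T$, the wave packet satisfies $\|E_\Phi f_T\|_\infty\lesssim r^{-1/4}\|f_T\|_2$. This is the standard normalization: $f_T$ is supported on an interval of length $r^{-1/2}$ and its extension is essentially constant at scale $r^{1/2}$ on $T$. Tubes not meeting $Q$ contribute $\mathrm{RapDec}(r)$ by Lemma \ref{spatial-concentration}. Since $\#\mathcal T_\si(Q)\lesssim M$, the triangle inequality gives
\[
|E_\Phi f_{Q,\si}(x)|^2\lesssim M^2 r^{-1/2}\max_T\|f_T\|_2^2 .
\]
Summing over the at most $K$ caps and integrating over $X\cap Q$, which has measure $\sim s$ by property (1), yields
\[
\|\BR_{\bar A}E_\Phi f_Q\|_{L^2(X\cap Q)}^2\lesssim K M^2 s\, r^{-1/2}\max_T\|f_T\|_2^2 .
\]

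\emph{Summing over cubes.} Summing over $Q\in\cq$ gives
\[
\|\BR_A E_\Phi f\|_{L^2(X)}^2\lessapprox K M^2 s\, r^{-1/2}(\#\cq)\max_T\|f_T\|_2^2 .
\]
By property (5) the $\|f_T\|_2$ are comparable for $T\in\mathcal T$. Since $\mathcal T'\subset\mathcal T$, Lemma \ref{l2-orthogonalty} gives
\[
\|f\|_2^2\gtrsim\sum_{T\in\mathcal T'}\|f_T\|_2^2\sim \#\mathcal T'\,\max_T\|f_T\|_2^2 .
\]
Substituting this into the previous display gives the claim.

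The only delicate point is the wave-packet normalization $\|E_\Phi f_T\|_\infty\lesssim r^{-1/4}\|f_T\|_2$ together with the rapid decay off $T$. This holds because $f_T=(f\vp_\theta)\ast\wh\psi_v$ is frequency-localized to $2\theta$ and has Fourier support in a window of width $r^{-1/2}$, so Cauchy--Schwarz on $\theta$ gives the $r^{-1/4}$ factor. Everything else is bookkeeping, and the $\lessapprox$ losses are absorbed into the stated estimate.
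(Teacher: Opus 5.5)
Your argument is correct: the bound $|E_\Phi f_T|\le\|f_T\|_1\lesssim r^{-1/4}\|f_T\|_2$, the count $\#\mathcal{T}_\si(Q)\lesssim M$, $|X\cap Q|\sim s$, and orthogonality with comparable $\|f_T\|_2$ give exactly the stated estimate. One small correction: apply the count to $\si$ and its $O(1)$ neighbouring caps, because $f_Q\Id_\si$ is not literally $\sum_{T\in\mathcal{T}_\si(Q)}f_T$ and also picks up wave packets whose support $3\theta_T$ straddles the boundary of $\si$. The factor $K$ is not even needed if you bound the broad part by $\max_\si$ instead of $\sum_\si$. The paper gives no proof here and only cites this standard $L^2$ estimate from \cite{Wu}; your $L^\infty$-plus-orthogonality argument is the natural proof behind it.
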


Combining this estimate with \eqref{card-T-prime}, we obtain
\begin{equation}\label{estimate-1-small-t}
\|\BR_A E_\Phi f\|_{L^2(X)}^2
\lessapprox_\e
K^{O(1)}
M^{1-\frac{t}{2}}
r^{\frac{3t}{4}-\frac{1}{2}}
\|f\|_2^2,
\qquad
0\leq t<1,
\end{equation}
and
\begin{equation}\label{estimate-1-large-t}
\|\BR_A E_\Phi f\|_{L^2(X)}^2
\lessapprox_\e
K^{O(1)}
M^{\frac{t}{2}}
r^{\frac{3t}{4}-\frac{1}{2}}
\|f\|_2^2,
\qquad
1\leq t\leq2.
\end{equation}
\smallskip

\subsubsection{Application of refined decoupling}

By applying Theorem \ref{refined-decoupling-thm}, Wu \cite{Wu} obtained
the following estimate.

\begin{proposition}
We have
\[
\|\BR_A E_\Phi f\|_{L^2(X)}^2
\lessapprox
K^{O(1)}
|X|^{2/3}
M^{2/3}
(\#\mathcal{T}')^{-2/3}
\|f\|_2^2.
\]
\end{proposition}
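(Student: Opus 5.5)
The plan is to follow Wu's argument: split the $L^2$ norm over the cubes $Q$, apply H\"older on each piece to pass to $L^6$, invoke refined decoupling (Theorem \ref{refined-decoupling-thm}) with multiplicity $M$, and finish with $L^2$ orthogonality and the count of tubes.

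First, by property (6) of the pigeonholing and since the quantities $\|\BR_{\bar A}E_\Phi f_Q\|_{L^2(X\cap Q)}$ are comparable, I pass to $\mathcal{Q}'$ at a cost of the factor $\#\mathcal{Q}/\#\mathcal{Q}'$. It is convenient to keep that cost in the form
\[
\sum_{Q\in\cq}\|\BR_{\bar A}E_\Phi f_Q\|_{L^2(X\cap Q)}^2\approx \frac{\#\cq}{\#\cq'}\sum_{Q\in\cq'}\|\BR_{\bar A}E_\Phi f_Q\|_{L^2(X\cap Q)}^2 .
\]
Next I apply H\"older in $x$ on $X'=X\cap\bigcup\cq'$, whose measure is $\sim s\#\cq'$:
\[
\|\cdot\|_{L^2(X')}^2\le |X'|^{2/3}\|\cdot\|_{L^6(X')}^4 ,
\]
and write $|X'|^{2/3}\le |X|^{2/3}$. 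Pointwise, $\BR_{\bar A}E_\Phi f_Q\le K^{O(1)}|E_\Phi g|$ on $Q$, where $g=\sum_{T\in\mathcal{T}'}f_T$. This holds up to rapidly decaying tails, because on $Q$ only the tubes meeting $Q$ contribute (Lemma \ref{spatial-concentration}), and the broad part is dominated by a single cap sum.

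The key step is to apply Theorem \ref{refined-decoupling-thm} to $g$ on $X'$. Every $r^{1/2}$-cube $Q\in\cq'$ meets $\#\mathcal{T}(Q)\lesssim K M$ tubes, since there are at most $K$ caps $\si$, each contributing $\sim M$ tubes. Hence
\[
\|E_\Phi g\|_{L^6(X')}\lessapprox (KM)^{1/3}\Big(\sum_{T\in\mathcal T'}\|E_\Phi f_T\|_{L^6(w_{B_r})}^6\Big)^{1/6}.
\]
For each wave packet, $\|E_\Phi f_T\|_{L^6}^6\lesssim r^{-1}\|E_\Phi f_T\|_{L^2(w_{B_r})}^6/|T|^{2}\cdot |T|$. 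Since the $\|f_T\|_2$ are comparable, this gives $\sum_T\|E_\Phi f_T\|_6^6\approx \#\mathcal T'\,\|f_T\|_2^6\,r^{O(1)}$ with the correct powers of $r$. By Lemma \ref{l2-orthogonalty}, $\|f_T\|_2^2\approx \|f\|_2^2/\#\mathcal{T}'$ after restricting $f$ to these tubes.

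Assembling these, $\|E_\Phi g\|_{L^6}^4$ becomes $M^{4/3}(\#\mathcal T')^{2/3}\|f_T\|_2^4$ times $r$-powers, which turns into $M^{4/3}(\#\mathcal{T}')^{-2/3}\|f\|_2^4$ up to normalization. To reach the statement, with $M^{2/3}$ and an $\|f\|_2^2$ factor, I would instead run H\"older directly against the $L^2$ norm, using the identity $\|E f_T\|_{L^6}^6\sim \|Ef_T\|_{L^2}^6/|T|^2$ together with $\|Ef_T\|_{L^2(B_r)}^2\sim r\|f_T\|_2^2$. I would also normalize via the interpolation $\|F\|_2^2\le |X|^{2/3}\|F\|_6^2$, the standard form of H\"older. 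That yields
\[
|X|^{2/3}(KM)^{2/3}(\#\mathcal T')^{1/3}\|f_T\|_2^2\,r\,|T|^{-2/3}.
\]
Since $|T|\sim r^{3/2}$, the powers of $r$ cancel, and substituting $\|f_T\|_2^2\sim\|f\|_2^2/\#\mathcal T'$ gives exactly $|X|^{2/3}M^{2/3}(\#\mathcal T')^{-2/3}\|f\|_2^2$.

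The main obstacle is reconciling the passage from $\cq$ to $\cq'$ with the use of the whole tube family. Tubes in $\mathcal{T}'$ are counted via $\cq'$, but the $L^2$ mass of $f$ may sit on tubes outside $\mathcal T'$. I would handle this by exploiting the comparability in property (3): I bound $\sum_{Q\in\cq}$ by $(\#\cq/\#\cq')$ times the sum over $\cq'$, and use $\#\mathcal T'\,\|f_T\|_2^2\lesssim\|f\|_2^2$. If the resulting factors do not cancel exactly, I would absorb them into the $K^{O(1)}$ and $\lessapprox$ losses, as Wu does.
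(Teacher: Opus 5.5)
\textbf{The gap is the passage from $\cq$ to $\cq'$.} The factor $\#\cq/\#\cq'$ you pay there is, by \eqref{subcollection-Q}, as large as $\approx r^{t/2}s^{-1}$. That is a genuine power of $r$ (for instance $r^{t/2}$ when $s\sim 1$), so it cannot be absorbed into $K^{O(1)}=r^{O(\e^{10})}$ or into a $\lessapprox$ loss. Nothing in your chain cancels it.

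After writing $|X'|^{2/3}\le |X|^{2/3}$ and $\#\mathcal T'\|f_T\|_2^2\lesssim\|f\|_2^2$, you arrive at
\[
\frac{\#\cq}{\#\cq'}\,|X|^{2/3}(KM)^{2/3}(\#\mathcal T')^{-2/3}\|f\|_2^2 .
\]
Keeping $|X'|\sim s\#\cq'\lesssim(\#\cq'/\#\cq)|X|$ still leaves a loss of $(\#\cq/\#\cq')^{1/3}$. If you normalize instead by $\#\mathcal T\|f_T\|_2^2\lesssim\|f\|_2^2$, the extra factor $\#\mathcal T'/\#\mathcal T$ need not be small, since the tubes through $\cq'$ can be almost all of $\mathcal T$.

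There are two smaller errors. First, the pointwise bound $\BR_{\bar A}E_\Phi f_Q\le K^{O(1)}|E_\Phi g|$ is false: the cap pieces can cancel in $E_\Phi g$ while $\bar A$ of them are individually large. The broad part is only dominated by $\max_\si|E_\Phi f_{Q,\si}|\le(\sum_\si|E_\Phi f_{Q,\si}|^6)^{1/6}$. Second, your first H\"older display has $\|\cdot\|_{L^6}^4$ where $\|\cdot\|_{L^6}^2$ is meant.

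\textbf{The fix is to not use $\cq'$ in this proposition at all.} It is needed only to supply the Katz--Tao hypothesis for the Furstenberg lower bound \eqref{card-T-prime}. Work instead with all of $\cq$ and the full family $\mathcal T$.

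For each $K^{-1}$-cap $\si$, set $g_\si:=\sum_{T\in\mathcal T,\ \xi_{\theta(T)}\in\si}f_T$. On each $Q\in\cq$, $E_\Phi f_{Q,\si}$ agrees with $E_\Phi g_\si$ up to negligible errors. By property (2), each $Q$ meets $\lesssim M$ tubes of $g_\si$.

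H\"older with exponents $3/2$ and $3$ over the cubes, together with the finite overlap of the $Q$'s, gives
\[
\sum_{Q\in\cq}\|\BR_{\bar A}E_\Phi f_Q\|_{L^2(X\cap Q)}^2\lesssim |X|^{2/3}\Big(\sum_{\si}\|E_\Phi g_\si\|_{L^6(\bigcup\cq)}^6\Big)^{1/3}.
\]
Now apply Theorem \ref{refined-decoupling-thm} to each $g_\si$ with multiplicity $M$, and use $\|E_\Phi f_T\|_{L^6(w_{B_r})}^6\lessapprox\|f_T\|_2^6$. This bounds the right-hand side by $|X|^{2/3}M^{2/3}(\#\mathcal T)^{1/3}\|f_T\|_2^2$. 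Comparability of the $\|f_T\|_2$ and Lemma \ref{l2-orthogonalty} then give the bound $|X|^{2/3}M^{2/3}(\#\mathcal T)^{-2/3}\|f\|_2^2$.

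Finally, $\mathcal T'\subset\mathcal T$ gives $(\#\mathcal T)^{-2/3}\le(\#\mathcal T')^{-2/3}$, which yields the proposition with no $\#\cq/\#\cq'$ loss.
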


Combining this estimate with \eqref{card-T-prime}, we obtain
\begin{equation}\label{estimate-2-small-t}
\|\BR_A E_\Phi f\|_{L^2(X)}^2
\lessapprox_\e
K^{O(1)}
M^{-\frac{t}{3}}
r^{\frac{t}{2}}
\|f\|_2^2,
\qquad
0\leq t<1,
\end{equation}
and
\begin{equation}\label{estimate-2-large-t}
\|\BR_A E_\Phi f\|_{L^2(X)}^2
\lessapprox_\e
K^{O(1)}
M^{\frac{t-2}{3}}
r^{\frac{t}{2}}
\|f\|_2^2,
\qquad
1\leq t\leq2.
\end{equation}
\smallskip
\subsubsection{Interpolation}
Interpolating between \eqref{estimate-0},
\eqref{estimate-1-small-t}, \eqref{estimate-1-large-t},
\eqref{estimate-2-small-t}, and \eqref{estimate-2-large-t},
we obtain the desired estimate and complete the proof.
\bigskip
\section{$L^p$-circular decay of Fourier transforms of fractal measures} \label{proof-decay}
In this section, we prove Theorem \ref{decay}. Our argument follows the
strategy developed in \cite{Wu}. The main modification is to apply
Theorem \ref{main-result} in place of the corresponding weighted
restriction estimate for $t=1$, which leads to the new decay bounds in
the full range of $t$ considered here.

\begin{proof} [Proof of Theorem \ref{decay}]
Partition $B_1(0)$ into non-overlapping $R^{-1}$-squares. By dyadic pigeonholing, it suffices to consider a dyadic number $\lambda$ and the corresponding set
\[
X_\lambda
:=
\bigcup_{\mu(B)\sim\lambda} B.
\]
Let $\mu_\lambda$ denote the restriction of $\mu$ to $X_\lambda$.
We may assume that
\[
\left(
\int_{S^1}|\widehat{\mu}(R\xi)|^p\,d\sigma(\xi)
\right)^{1/p}
\lessapprox
\left(
\int_{S^1}|\widehat{\mu_\lambda}(R\xi)|^p\,d\sigma(\xi)
\right)^{1/p}.
\]

By the Frostman condition, for every $R^{-1}\leq r\leq1$ and
$x\in B_1(0)$,
\[
\#\{B\subset X_\lambda:B\cap B_r(x)\neq\varnothing\}
\lesssim
\lambda^{-1}r^t.
\]
It follows that
\[
\frac{|X_\lambda\cap B_r(x)|R^2}{(rR)^t}
\lesssim
(\lambda R^t)^{-1}.
\]
Hence, if
\[
\gamma_\lambda
:=
\sup_{\substack{R^{-1}\leq r\leq1\\x\in B_1(0)}}
\frac{|X_\lambda\cap B_r(x)|R^2}{(rR)^t},
\]
then
\begin{equation}\label{gamma-lambda}
\gamma_\lambda
\lesssim
(\lambda R^t)^{-1}.
\end{equation}

By duality, there exists
$f\in L^{p'}(S^1,d\sigma)$ with
\[
\|f\|_{L^{p'}(S^1,d\sigma)}=1
\]
such that
\[
\left(
\int_{S^1}
|\widehat{\mu_\lambda}(R\xi)|^p\,d\sigma(\xi)
\right)^{1/p}
\lesssim
\left|
\int E_Sf(Rx)\,d\mu_\lambda(x)
\right|.
\]
By the uncertainty principle,
\[
\left|
\int E_Sf(Rx)\,d\mu_\lambda(x)
\right|
\lesssim
\lambda R^2
\int_{X_\lambda}|E_Sf(Rx)|\,dx
=
\lambda
\int_{\widetilde X_\lambda}|E_Sf(x)|\,dx,
\]
where
\[
\widetilde X_\lambda:=RX_\lambda.
\]

By another dyadic pigeonholing, we may assume that the quantities
$\|E_Sf\|_{L^1(B)}$ are comparable over the unit balls
$B\subset\widetilde X_\lambda$. By \eqref{gamma-lambda} and
Lemma \ref{katz-tao-set-lem}, there exists a union of unit balls
\[
\widetilde X'_\lambda
\subset
\widetilde X_\lambda
\]
such that
\[
|\widetilde X'_\lambda|
\gtrapprox
\lambda R^t|\widetilde X_\lambda|,
\]
and the $R^{-1}$-dilate of $\widetilde X'_\lambda$ is a
Katz--Tao $(R^{-1},t)$-set. Consequently,
\[
\left|
\int E_Sf(Rx)\,d\mu_\lambda(x)
\right|
\lessapprox
R^{-t}
\int_{\widetilde X'_\lambda}|E_Sf(x)|\,dx.
\]
Since
\[
|\widetilde X'_\lambda|\lesssim R^t,
\]
H\"older's inequality gives
\begin{equation}\label{decay-reduction}
\left(
\int_{S^1}
|\widehat{\mu_\lambda}(R\xi)|^p\,d\sigma(\xi)
\right)^{1/p}
\lessapprox
R^{-t}
|\widetilde X'_\lambda|^{1-\frac1q}
\|E_Sf\|_{L^q(\widetilde X'_\lambda)}.
\end{equation}

Suppose first that
\[
\frac{9-\sqrt{33}}{4}\leq t<1.
\]
Choose
\[
q=2tp.
\]
Then
\[
\frac{2t}{q}+\frac1{p'}=1.
\]
Moreover, the condition
\[
\frac{3t+6}{-t^2+6t}\leq p\leq2
\]
is equivalent to
\[
\frac{6t+12}{6-t}\leq q\leq4t.
\]
Thus Theorem \ref{main-result} applies. From
\eqref{decay-reduction}, we obtain
\[
\begin{aligned}
\left(
\int_{S^1}
|\widehat{\mu_\lambda}(R\xi)|^p\,d\sigma(\xi)
\right)^{1/p}
&\lesssim_{\e,p}
R^{-t+\e}
|\widetilde X'_\lambda|^{1-\frac1q} \\
&\lesssim_{\e,p}
R^{-\frac{t}{q}+\e}
=
R^{-\frac1{2p}+\e}.
\end{aligned}
\]

Now suppose that
\[
1\leq t\leq2.
\]
Choose
\[
q=(t+1)p.
\]
Then
\[
\frac{t+1}{q}+\frac1{p'}=1,
\]
and the condition
\[
\frac{18t}{t^2+5t+4}\leq p\leq2
\]
is equivalent to
\[
\frac{18t}{t+4}\leq q\leq2t+2.
\]
Applying Theorem \ref{main-result} again, we obtain
\[
\begin{aligned}
\left(
\int_{S^1}
|\widehat{\mu_\lambda}(R\xi)|^p\,d\sigma(\xi)
\right)^{1/p}
&\lesssim_{\e,p}
R^{-t+\e}
|\widetilde X'_\lambda|^{1-\frac1q} \\
&\lesssim_{\e,p}
R^{-\frac{t}{q}+\e}
=
R^{-\frac{t}{(t+1)p}+\e}.
\end{aligned}
\]
This completes the proof.
\end{proof}

\bigskip

\section{further discussions}\label{proof-example}
In this section, we prove Theorem \ref{broad-lower-bound-example}. We begin by constructing the function $f$, following the argument in
\cite{Wu}. 
Let
\[
Q:=R^{\frac{t}{3}-\frac16},
\qquad
N:=R^{1/2}.
\]
Choose a nonnegative function
$\psi\in C_c^\infty((-10^{-2},10^{-2}))$ with
$\psi\equiv1$ near the origin, and define
\[
f(\xi)
:=
\sum_{|k|\leq N/2}
\psi\left(R\left(\xi-\frac{k}{N}\right)\right).
\]
The summands have pairwise disjoint supports, and hence
\begin{equation}\label{example-f-norm}
\|f\|_2\sim R^{-1/4}.
\end{equation}

We next define the arithmetic set
\[
\mathcal{P}
:=
\left\{
(x_1,x_2)\in B_R(0):
x_1
=
R^{1/2}\left(n+\frac aq\right),
\quad
x_2
=
R\frac bq,
\quad
a,b,q\sim Q
\right\},
\]
where the parameters are restricted to a suitable admissible
subfamily for which the associated quadratic Gauss sums have
square-root size. Let $X$ be the union of unit balls centered at the
points of $\mathcal{P}$. A direct counting argument gives
\begin{equation}\label{example-X-cardinality}
|X|
\approx
R^{1/2}Q^3
=
R^t.
\end{equation}

For $x=(x_1,x_2)$ corresponding to
\[
x_1=R^{1/2}\left(n+\frac aq\right),
\qquad
x_2=R\frac bq,
\]
the uncertainty principle gives
\[
E_\Phi f(x)
\approx
R^{-1}
\sum_{|k|\leq N/2}
e\left(
\frac{bk^2+ak}{q}
\right).
\]
By the quadratic Gauss sum estimate,
\[
\left|
\sum_{|k|\leq N/2}
e\left(
\frac{bk^2+ak}{q}
\right)
\right|
\gtrapprox
Nq^{-1/2},
\]
and therefore
\begin{equation}\label{example-pointwise-lower}
|E_\Phi f(x)|
\gtrapprox
R^{-1/2}Q^{-1/2}.
\end{equation}
After a further pigeonholing over the $K^{-1}$-caps, the same
construction yields
\[
\BR_AE_\Phi f(x)
\gtrsim_\e
R^{-\e}R^{-1/2}Q^{-1/2}
\]
on a fixed proportion of $X$. Consequently,
\[
\begin{aligned}
\|\BR_AE_\Phi f\|_{L^2(X)}
&\gtrsim_\e
R^{-\e}
R^{-1/2}Q^{-1/2}|X|^{1/2}
\\
&\approx
R^{-\e}Q\|f\|_2.
\end{aligned}
\]
Since $|X|\approx R^t$ and
$Q=R^{(2t-1)/6}$, we conclude that
\[
\|\BR_AE_\Phi f\|_{L^2(X)}
\gtrsim_\e
R^{-\e}
|X|^{\frac{2t-1}{6t}}
\|f\|_2.
\]
It remains to verify that the $R^{-1}$-dilate of $X$ is a Katz--Tao
$(R^{-1},t)$-set. The argument becomes more delicate when $t>5/4$.
\begin{proposition}
    The $R^{-1}$-dilate of $X$ is a Katz--Tao $(R^{-1},t)$-set.
\end{proposition}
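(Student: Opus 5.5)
The plan is to verify the Katz--Tao condition directly by counting lattice-type points. Since $X$ is a union of unit balls centred at $\mathcal P$, it suffices to show that for every $\rho\in[1,R]$ and every ball $B_\rho$,
\[
\#(\mathcal P\cap B_\rho)\lessapprox \rho^t .
\]
We first need the points of $\mathcal P$ to be $\gtrsim 1$-separated. I will restrict the admissible family so that $b/q$ is in lowest terms. If this is not possible, the divisor bound gives a multiplicity loss of only $R^{\e}$. With this restriction the separation follows from the two spacing facts below, since $Q\le R^{1/2}$ whenever $t\le 2$.

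\emph{Vertical spacing.} The second coordinates are $x_2=Rb/q$ with $b,q\sim Q$. Two distinct reduced fractions with denominators $\lesssim Q$ differ by $\gtrsim Q^{-2}$. Hence the number of admissible $x_2$-values in an interval of length $\rho$ is
\[
\lesssim 1+\rho Q^2/R .
\]
I will use the standard count of Farey fractions in an interval of length $h$, which is $\lesssim 1+Q^2h$.

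\emph{Horizontal spacing.} Fix $x_2$; this fixes $q$ up to the $R^\e$ multiplicity. The first coordinates $R^{1/2}(n+a/q)$ then form a subset of the progression $R^{1/2}q^{-1}\mathbb Z$, with spacing $\sim R^{1/2}/Q$. An interval of length $\rho$ therefore contains
\[
\lesssim 1+\rho Q/R^{1/2}
\]
of them. Combining the two counts,
\[
\#(\mathcal P\cap B_\rho)\lessapprox \Big(1+\frac{\rho Q^2}{R}\Big)\Big(1+\frac{\rho Q}{R^{1/2}}\Big).
\]

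It remains to compare this product with $\rho^t$, using $Q=R^{(2t-1)/6}$. The threshold $R^{1/2}/Q$ lies below $R/Q^2$ because $Q\le R^{1/2}$. This splits $[1,R]$ into three regimes.

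\textbf{Regime $\rho\le R^{1/2}/Q$.} The bound is $1\le\rho^t$.

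\textbf{Regime $R^{1/2}/Q\le\rho\le R/Q^2$.} The bound is $\rho Q R^{-1/2}$, so we need $\rho^{t-1}\ge QR^{-1/2}$.
\begin{itemize}
\item For $t\ge1$ the left side is increasing in $\rho$. Checking at $\rho=R^{1/2}/Q$ reduces to $(R^{1/2}/Q)^t\ge1$, which holds.
\item For $t<1$ the left side is decreasing, so we check at $\rho=R/Q^2$. This becomes $R^{2t-1}\ge R^{(2t-1)^2/6}$, which is true since $0\le 2t-1\le 6$ in the range $t\ge 1/2$.
\end{itemize}

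\textbf{Regime $\rho\ge R/Q^2$.} The bound is $\rho^2Q^3R^{-3/2}$, so we need $\rho^{2-t}\le R^{3/2}Q^{-3}=R^{2-t}$. This holds for all $\rho\le R$ because $t\le2$, and it is sharp at $\rho=R$, consistent with $|X|\approx R^t$.

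I expect the main obstacle to be making the product bound honest, i.e.\ ensuring that the horizontal count is really per fixed $q$. For different $x_2$-rows the grids $R^{1/2}q^{-1}\mathbb Z$ differ, and the argument must not lose a factor from rows whose $q$ are not determined by $x_2$. This is also where the restriction to the admissible Gauss-sum subfamily must be compatible with reducedness.

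The paper flags $t>5/4$ as delicate, and this is exactly when $Q^2>R^{1/2}$. Then $R/Q^2<R^{1/2}$, so a ball of radius $\rho\in[R/Q^2,R^{1/2}]$ can meet several rows whose fractions $b/q$ are close but have different denominators. If the crude row-by-row product bound turns out to be insufficient there, I would first try counting pairs $(b/q,a/q)$ jointly. Specifically, I would count solutions of $|b/q-y|\le\rho/R$ and $|n+a/q-z|\le\rho/R^{1/2}$ summed over $q$, using the Farey count for each fixed $q$ separately: for each $q$ there are $\lesssim 1+Q\rho/R$ choices of $b$ and $\lesssim 1+Q\rho/R^{1/2}$ choices of $(n,a)$. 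I would then sum over $q\sim Q$ and recheck the same three regimes.
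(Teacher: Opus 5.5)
Your argument is correct, and it takes a genuinely different route from the paper's.

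\textbf{The paper's route.} The paper works in dilated coordinates and splits at $\rho=R^{-1/2}$.
\begin{enumerate}
\item For $\rho\ge R^{-1/2}$ it sums over tuples $(q,a,n,b)$ and gets $Q^2(1+\rho Q)(1+\rho R^{1/2})$.
\item For $\rho\le R^{-1/2}$ it fixes $n$ and counts rational points $(a/q,b/q)$ in an $L\times\rho$ rectangle, with $L=\rho R^{1/2}$. Non-collinear configurations are handled by the determinant bound: triples span area $\gtrsim Q^{-3}$, giving $1+Q^3L\rho$. Collinear configurations are handled by $\min\{Q^2L,\rho R\}$.
\end{enumerate}
The threshold $t=5/4$ is exactly where the Farey-spacing bound $Q^2L$ for collinear points fails at $\rho=R^{-1}$. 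There the paper has to fall back on the trivial covering bound $\rho R$.

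\textbf{What your route buys.} Your product bound needs neither the determinant argument nor the degenerate case. Once $\gcd(b,q)=1$, each row $x_2=Rb/q$ determines $q$. Its points then lie on the single progression $R^{1/2}q^{-1}\mathbb{Z}$, with spacing $\sim R^{1/2}/Q$. Along horizontal lines this is far better than Farey spacing, which is why $t>5/4$ needs no special treatment. For the same reason your closing worry is unfounded. The per-row bound $1+\rho Q/R^{1/2}$ is uniform in $q\sim Q$, so rows with different denominators cost nothing. Your three-regime check already covers $5/4<t\le 2$ unchanged. You also do not need $\gtrsim 1$-separation, since the covering number is at most the number of distinct points.

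\textbf{What it costs.} Your route needs reducedness; the paper's does not, because the determinant bound and the $Q^{-2}$ separation of distinct fractions hold for any representation. Restricting to $\gcd(b,q)=1$ is legitimate here, for three reasons:
\begin{enumerate}
\item the standard square-root evaluation of the Gauss sum is the coprime case;
\item coprime pairs are a positive proportion, so $|X|\approx R^t$ is preserved;
\item the Katz--Tao condition passes to subsets.
\end{enumerate}
Your fallback, however, is wrong. Without reducedness, a row $b/q=b_0/q_0$ with $q_0=O(1)$ is represented by $\sim Q$ pairs $(kb_0,kq_0)$. Its $x_1$-coordinates $R^{1/2}(n+a/(kq_0))$ are then Farey-dense, with typical spacing $\sim R^{1/2}/Q^2$. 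The per-row count degrades by a factor up to $Q$, not $R^{\varepsilon}$, and no divisor bound controls this.

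\textbf{A harmless slip.} In the middle regime for $t<1$, the condition at $\rho=R/Q^2$ is $R^{t-1/2}\ge Q^{2t-1}=R^{(2t-1)^2/6}$, i.e.\ $0\le 2t-1\le 3$. It is not $R^{2t-1}\ge R^{(2t-1)^2/6}$, though the conclusion is unaffected.
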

\begin{proof}
Set
\[
\delta:=R^{-1},
\qquad
Q:=R^{\frac{t}{3}-\frac16}.
\]
After dilation by $R^{-1}$, the centers of the balls forming $X$ are
contained in
\[
\mathcal{P}
:=
\left\{
\left(
R^{-1/2}\left(n+\frac{a}{q}\right),
\frac{b}{q}
\right):
a,b,q\sim Q,\ n\in\mathbb{Z}
\right\}\cap B_1(0).
\]
It therefore suffices to show that, for every
$\delta\leq \rho\leq1$ and $z\in B_1(0)$,
\begin{equation}\label{KT-example-goal}
N_\delta\big(\mathcal{P}\cap B_\rho(z)\big)
\lesssim
(\rho R)^t,
\end{equation}
where $N_\delta$ denotes the $\delta$-covering number.

We first consider the range
\[
R^{-1/2}\leq \rho\leq1.
\]
For fixed $a,q$, the condition
\[
R^{-1/2}\left(n+\frac{a}{q}\right)\in I_\rho
\]
allows at most
\[
O(1+\rho R^{1/2})
\]
choices of $n$, while for each fixed $q$ the condition
\[
\frac{b}{q}\in I_\rho
\]
allows at most $O(1+\rho Q)$ choices of $b$. Hence
\[
N_\delta\big(\mathcal{P}\cap B_\rho(z)\big)
\lesssim
Q^2(1+\rho Q)(1+\rho R^{1/2}).
\]
Using
\[
Q^3=R^{t-\frac12},
\]
the right-hand side is bounded by
\[
Q^2+\rho Q^3
+\rho R^{1/2}Q^2
+\rho^2R^t.
\]
Since $R^{-1/2}\leq\rho\leq1$ and $1/2\leq t\leq2$, each of these
terms is bounded by $(\rho R)^t$. Thus \eqref{KT-example-goal}
holds in this range.

We next consider
\[
R^{-1}\leq\rho\leq R^{-1/2}.
\]
A ball of radius $\rho$ intersects only $O(1)$ possible values of $n$.
For each such $n$, after rescaling the first coordinate by $R^{1/2}$,
we are reduced to counting rational points
\[
\left(\frac{a}{q},\frac{b}{q}\right),
\qquad a,b,q\sim Q,
\]
inside a rectangle of dimensions
\[
L\times\rho,
\qquad
L:=\rho R^{1/2}.
\]

We use the standard rational-point counting estimate
\begin{equation}\label{rational-point-counting}
N_\delta
\lesssim
1+Q^3L\rho+\min\{Q^2L,\rho R\}.
\end{equation}
Indeed, the term $Q^3L\rho$ corresponds to the nondegenerate
two-dimensional contribution. This follows from the usual determinant
argument: three non-collinear rational points of height $\sim Q$
span a triangle of area $\gtrsim Q^{-3}$. In the degenerate case,
the rational points lie on a line. Distinct rational points of height
$\sim Q$ are $\gtrsim Q^{-2}$-separated, and hence there are at most
$O(1+Q^2L)$ such points in the rectangle. On the other hand, after
returning to the physical coordinates, their intersection with
$B_\rho(z)$ lies in a line segment of length $O(\rho)$, which can be
covered by $O(1+\rho R)$ balls of radius $\delta$. This gives the last
term in \eqref{rational-point-counting}.

Substituting $L=\rho R^{1/2}$ and
$Q=R^{t/3-1/6}$ into \eqref{rational-point-counting}, we obtain
\begin{equation}\label{KT-example-count}
N_\delta\big(\mathcal{P}\cap B_\rho(z)\big)
\lesssim
1+\rho^2R^t
+
\min\left\{
\rho R^{\frac{4t+1}{6}},
\rho R
\right\}.
\end{equation}
The first two terms satisfy
\[
1+\rho^2R^t
\lesssim
(\rho R)^t,
\]
since $\rho R\geq1$ and $t\leq2$.

If $1\leq t\leq2$, then
\[
\rho R\leq(\rho R)^t,
\]
and hence the last term in \eqref{KT-example-count} is also acceptable.

If $1/2\leq t<1$, then, using $\rho\leq R^{-1/2}$,
\[
\begin{aligned}
\rho R^{\frac{4t+1}{6}}
&=
(\rho R)^t
\,
\rho^{1-t}
R^{\frac{1-2t}{6}} \\
&\leq
(\rho R)^t
R^{-\frac{1-t}{2}}
R^{\frac{1-2t}{6}} \\
&=
(\rho R)^t
R^{\frac{t-2}{6}}
\lesssim
(\rho R)^t.
\end{aligned}
\]
Thus \eqref{KT-example-goal} holds for all
$R^{-1}\leq\rho\leq1$.

Therefore, the $R^{-1}$-dilate of $X$ is a Katz--Tao
$(R^{-1},t)$-set.    
\end{proof}


\bigskip




\begin{thebibliography}{99}
\bibitem{BG}
J.~Bourgain and L.~Guth.
\newblock Bounds on oscillatory integral operators based on multilinear estimates.
\newblock {\em Geom. Funct. Anal.}, 21 (2011), no. 6, 1239–1295. MR 2860188

\bibitem{DGOWWZ}
X.~Du, L.~Guth, Y.~Ou, H.~Wang, B.~Wilson, and R.~Zhang.
\newblock Weighted restriction estimates and application to Falconer distance set problem.
\newblock American Journal of Mathematics, 143(1):175--211, 2021.

\bibitem{Erdogan}
M.~B.~Erdo\u{g}an.
\newblock A bilinear Fourier extension theorem and applications to the distance set problem.
\newblock International Mathematics Research Notices, 2005(23):1411--1425, 2005.

\bibitem{G}
L. ~Guth.
\newblock A restriction estimate using polynomial partitioning.
\newblock {\em J. Amer. Math. Soc.}, 29(2):371--413, 2016.

\bibitem{GIOW}
L. ~Guth, A. ~Iosevich, Y. ~Ou, and H. ~Wang.
\newblock On {F}alconer's distance set problem in the plane.
\newblock {\em Invent. Math.}, 219(3):779--830, 2020.

\bibitem{LucaRogers}
R.~Luc\`a and K.~M.~Rogers.
\newblock Average decay of the Fourier transform of measures with applications.
\newblock Journal of the European Mathematical Society, 21(2):465--506, 2019.

\bibitem{Shayya}
B.~Shayya.
\newblock Weighted restriction estimates using polynomial partitioning.
\newblock Proceedings of the London Mathematical Society, 115(3):545--598, 2017.

\bibitem{Wolff}
T.~Wolff.
\newblock Decay of circular means of Fourier transforms of measures.
\newblock International Mathematics Research Notices, 1999(10):547--567, 1999.

\bibitem{Wu}
S.~Wu. 
\newblock Weighted $L^2$ estimates with applications to $L^p$ problems.
\newblock 	arXiv:2506.02650, 2025

\bibitem{Wang-Wu24}
H.~Wang and S.~Wu.
\newblock Restriction estimates using decoupling theorems and two-ends {F}urstenberg inequalities.
\newblock arXiv:2411.08871, 2024.

\bibitem{Wang-Wu26}
H.~Wang and S.~Wu.
\newblock Two-ends Furstenberg estimates in the plane.
\newblock Mathematische Annalen, Volume 395, article number 91, 2026

\end{thebibliography}
\end{document}